\documentclass{file}
\usepackage{pat}
\usepackage{paralist}
\usepackage{dsfont}  
\usepackage[utf8x]{inputenc}
\usepackage{skull}
\usepackage{paralist}
\usepackage{centernot}
\usepackage{mathtools}
\usepackage{thmtools}
\usepackage{hyperref}
\usepackage[table]{xcolor}
\usepackage{stmaryrd,tikz}
\usetikzlibrary{calc}
\usepackage{graphicx}
\usepackage{booktabs}
\usepackage{listings}
\usepackage{pifont}%
\usepackage[normalem]{ulem}
\usepackage{cancel,float}
\usepackage{comment}

\usepackage{todonotes}
\usepackage{mathdots}
\usepackage{bm}
\usepackage{etoolbox}
\definecolor{maroon}{rgb}{0.5, 0.0, 0.0}
\definecolor{darkblue}{rgb}{0.0, 0.0, 0.55}

\usepackage[longnamesfirst,numbers,sort&compress]{natbib}

\usepackage[mathlines]{lineno}
\newcommand*\patchAmsMathEnvironmentForLineno[1]{%
 \expandafter\let\csname old#1\expandafter\endcsname\csname #1\endcsname
 \expandafter\let\csname oldend#1\expandafter\endcsname\csname end#1\endcsname
 \renewenvironment{#1}%
    {\linenomath\csname old#1\endcsname}%
    {\csname oldend#1\endcsname\endlinenomath}}%
\newcommand*\patchBothAmsMathEnvironmentsForLineno[1]{%
 \patchAmsMathEnvironmentForLineno{#1}%
 \patchAmsMathEnvironmentForLineno{#1*}}%
\AtBeginDocument{%
\patchBothAmsMathEnvironmentsForLineno{equation}%
\patchBothAmsMathEnvironmentsForLineno{align}%
\patchBothAmsMathEnvironmentsForLineno{flalign}%
\patchBothAmsMathEnvironmentsForLineno{alignat}%
\patchBothAmsMathEnvironmentsForLineno{gather}%
\patchBothAmsMathEnvironmentsForLineno{multline}%
}

\definecolor{brightmaroon}{rgb}{0.76, 0.13, 0.28}
\definecolor{linkblue}{rgb}{0, 0.337, 0.227}
\newcommand{\rank}{\mathop{\mathsf{Rank}}}
\newcommand{\spec}{\mathop{\mathsf{Spec}}}
\newcommand{\tr}{\mathop{\mathsf{Trace}}}
\newcommand{\aut}{\mathop{\mathsf{Aut}}}

\newrobustcmd{\onesub}{\mathord{\includegraphics{one-sub}}}
\newrobustcmd{\leftup}{\mathord{\includegraphics{left-up}}}
\makeatletter
\newcommand{\xMapsto}[2][]{\ext@arrow 0599{\Mapstofill@}{#1}{#2}}
\def\Mapstofill@{\arrowfill@{\Mapstochar\Relbar}\Relbar\Rightarrow}
\makeatother

\title{\MakeUppercase{On Matrix Product Factorization in Association Schemes}}
\author{Allen W. Herman
\thanks{Department of Mathematics and Statistics, University of Regina},\quad
Bobby Miraftab
\thanks{School of Computer Science,
Carleton University}
}

\date{}

\begin{document}

\maketitle

\begin{abstract}
We study matrix product factorizations (MPFs) in symmetric association schemes:
identities $A_SA_T=A_U$ where $A_S,A_T,A_U$ are loopless unions of basic
relations and the ordinary matrix product is again a $0$-$1$ adjacency
matrix.  We give equivalent structural and spectral criteria for MPFs, derive
valency and rank restrictions, and analyze several standard families.  For
$2$-class schemes, the only nontrivial loopless MPF comes from the scheme of
the $5$-cycle.  For $P$-polynomial schemes, the distance-regular recurrence
gives strong restrictions on products $A_1A_i$.  We also prove a universal
pentagon theorem for the case $A_SA_T=J-I$, and show that extremal rank forces
all non-zero eigenvalues of $A_U$ to be $\pm k(U)$, hence gives bipartiteness.
Finally, in Hamming schemes we obtain rank obstructions and classify MPFs of
the form $A_1A_T=A_U$: in $H(d,2)$, for $d\ge2$, the only non-zero loopless
example is $A_1A_d=A_{d-1}$, which is trivial since $A_d$ has valency $1$;
for $q>2$, no non-zero example occurs.
\end{abstract}

\section{Introduction}

Association schemes are a standard framework for organizing families of highly
regular graphs on a common vertex set.  
In this paper we study a simple closure phenomenon inside the Bose--Mesner algebra.  
A loopless \defin{graph in the scheme} is a union of basic relations, with adjacency matrix
\[
A_S=\sum_{i\in S}A_i,\qquad S\subseteq\{1,\dots,d\}.
\]
Given two such graphs $A_S$ and $A_T$, their product $A_SA_T$ counts $S\!\to\!T$
two--step walks.  Typically this product has entries larger than $1$, so it is
not the adjacency matrix of a simple graph.  We isolate the exceptional case:

We say that $A_SA_T=A_U$ is a (loopless) \defin{matrix-product factorization (MPF)} if $S,T,U\subseteq\{1,\dots,d\}$ and the matrix product is again a $\{0,1\}$ adjacency matrix of a loopless graph in
the scheme.

Equivalently, for every ordered pair $(x,y)$ there is \emph{at most one}
intermediate vertex $z$ that realizes an $S$--step followed by a $T$--step from
$x$ to $y$, and there are no such $2$--walks from $x$ back to $x$ (no loops).
This viewpoint makes MPFs a natural ``uniqueness'' condition on two--step walks in a colored complete graph.
This approach generalizes the previous frame work on MPF of graphs, see
\cite{maghsoudi2023matrix,miraftab2025factorability,prime,herman2025matrix}.

Our first goal is to give usable criteria for deciding when an MPF occurs in our setting.
We provide two equivalent tests.  One is combinatorial, in terms of the
intersection numbers~\cref{prop:structure-test}).
The other is
spectral, in terms of the first eigenmatrix $P$~\Cref{prop:spectral-test}),
which turns MPF existence into a discrete subset--sum constraint on the columns
of $P$ (~\Cref{rem:subset-sum}).  
We also provide general consequences for
valencies, such as multiplicativity $k(U)=k(S)k(T)$ and a universal degree--sum
bound (\Cref{cor:valency-multiplicativity}).

Our second goal is to understand MPFs in important families of schemes.
For $2$--class schemes (equivalently, strongly regular graphs and their complements),
we show that the only nontrivial loopless MPF forces the scheme to be the one
coming from the $5$--cycle.  For $P$--polynomial schemes (distance--regular graphs),
the three--term recurrence for distance matrices imposes strong restrictions on
when products like $A_1A_i$ can be $\{0,1\}$--matrices; we derive explicit
constraints in several cases (\Cref{sec:DRG}).

Our main new general results are two rigidity statements.  
\Cref{thm:universal-pentagon}
shows that if the nontrivial relations split as $S\sqcup T$(disjoint union) and
$A_SA_T=J-I$, then necessarily $v=5$ and $A_S,A_T$ are complementary $5$--cycles
(i.e., the corresponding fusion scheme is the $2$--class scheme of $C_5$).
\Cref{thm:rank-mpf} introduces a rank method: any MPF satisfies
\[
\rank (A_U)\le \min(\rank (A_S),\rank (A_T)),
\]
and, when $U\ne\emptyset$, the output gives the lower bound $\rank(A_S),\rank(A_T)\ge \frac{v}{k(U)}$.
We then analyze the extremal case
$\rank(A_U)=\frac{v}{k(U)}$
and show that every nonzero eigenvalue of $A_U$ is equal to
$\pm k(U)$, which implies bipartiteness
(~\Cref{thm:DRG-extremal-rank}).

Finally, we apply these tools to concrete schemes.  In Hamming schemes we obtain
rank obstructions for MPFs with a single distance class as output.  In the
binary case $H(d,2)$, for $d\ge2$, the only nonzero loopless MPF of the form
$A_1A_T=A_U$ is $A_1A_d=A_{d-1}$,
which is trivial since $A_d$ has valency $1$.  For $q>2$, no nonzero
loopless MPF of this form occurs.

\medskip
\noindent\textbf{Organization.}
\Cref{sec:prel} recalls basic notation and proves the structural and spectral MPF
tests.  
\Cref{sec:srg} and \Cref{sec:DRG} treat the strongly regular and distance--regular
settings.  
\Cref{sec:hom} extends the MPF notion to homogeneous coherent configurations.  \Cref{sec:rankbounds} introduces the rank method, \Cref{sec:hamm} develops its applications to Hamming schemes and related examples.  
\Cref{sec:cyclotomic} relates MPFs in translation and cyclotomic schemes to
group-ring factorizations and $\lambda$-fold near-factorizations.
The universal pentagon theorem is proved in \Cref{sec:universal-pentagon}.

\section{Preliminaries}\label{sec:prel}
A \defin{$d$-class association scheme} on a finite set $X$ is a collection of
relations $\mathcal{R} = \{R_0, R_1, \dots, R_d \}$ on $X$ such that:
\begin{enumerate}
    \item $R_0 = \{ (x,x) : x \in X \}$ is the diagonal relation.
    \item $\mathcal{R}$ is a partition of $X \times X$, i.e.,
    $X \times X = R_0 \,\sqcup \, R_1 \,\sqcup \, \cdots \,\sqcup \, R_d$.
    \item $\mathcal{R}$ is transpose-invariant: $\forall R_i \in \mathcal{R}, R_{i'} = \{ (x,y) \in X \times X : (y,x) \in R_i \} \in \mathcal{R}$.
    
    \item For any $i,j,k \in \{0,\dots,d\}$ and any $(x,y) \in R_k$, the number
    \[
        p_{ij}^k = \left| \{ z \in X : (x,z) \in R_i \text{ and } (z,y) \in R_j \} \right|
    \]
    is constant (i.e., independent of the choice of $(x,y) \in R_k$). These constants
    $p_{ij}^k$ are called the \defin{intersection numbers} of the scheme.
    \item $\forall i,j,k \in \{0,\dots,d\}$, $p_{ij}^k = p_{ji}^k$. 
\end{enumerate}
The relations $R_0,\dots,R_d$ are called the \defin{associate classes} of the scheme.

Let $\mathcal{R}=\{R_0,R_1,\dots,R_d\}$ be an association scheme on the finite vertex set $X$.
Denote by $A_0,\dots,A_d$ the adjacency matrices of the relations $R_0,\dots,R_d$, with $A_0=I$ and $\sum_{i=0}^d A_i=J$.  For each $A_i$, the matrix transpose $A_i^{\top} = A_{i'}$ is also in the set $\{A_0,A_1,\dots,A_d\}$.  The association scheme is said to be \defin{symmetric} when all of its adjacency matrices $A_i$ are symmetric.  Since 
\[
A_iA_j=\sum_{k=0}^d p_{ij}^k\,A_k\qquad(0\le i,j\le d).
\]
the \defin{Bose–Mesner algebra} $\mathbb{C}[\mathfrak{X}]=\mathrm{span}\{A_0,\dots,A_d\}$ is a commutative semisimple algebra under ordinary matrix multiplication, whose structure constants in the basis $\{A_0,A_1,\dots,A_d\}$ are precisely the intersection numbers.  For all $i=0,1,\dots,d$, the intersection number $k_i:=p_{ii'}^0$ corresponds to the eigenvalue of $A_i$ whose eigenvector is the all $1$'s vector $\mathbf{1}$, we will refer to this positive integer as the \defin{valency} of $A_i$.   
For each basic relation $A_i$, let $k_i:=p_{ii'}^0$ denote its valency, i.e.,
$A_i\mathbf{1}=k_i\mathbf{1}$.
For an index set $S\subseteq\{1,\dots,d\}$ define the \defin{valency (degree)} of the union
\[
A_S:=\sum_{i\in S}A_i
\qquad\text{by}\qquad
k(S):=\sum_{i\in S}k_i,
\]
equivalently $A_S\mathbf{1}=k(S)\mathbf{1}$.


Let $\mathcal{R}$ be a symmetric $d$-class association scheme on the vertex set $X$.  A (loopless) \defin{graph} in the scheme will be a simple graph on $X$ whose adjacency matrix is a $01$-matrix of the form
\[
A_S \;:=\; \sum_{i\in S} A_i,
\qquad S\subseteq\{1,2,\dots,d\},
\]
i.e., a union of basic relations avoiding $R_0$ (to exclude loops).  
Likewise, for another index set $T\subseteq\{1,\dots,d\}$ we set
\[
A_T \;:=\; \sum_{j\in T} A_j.
\]
We note that $S$ records \emph{which relation indices are allowed on the first step} of a walk, and $T$ records \emph{which relation indices are allowed on the second step}. Thus $S$ and $T$ are simply subsets of indices $\{1,\dots,d\}$ labelling unions of relations.

\begin{defn}
We say that $A_SA_T=A_U$ is a \defin{matrix product factorization} or for short \defin{MPF} if $U\subseteq\{1,\dots,d\}$ and
\[
A_SA_T \;=\; \sum_{k\in U}A_k,
\]
i.e., the two–step walk ``through $S$ then $T$'' again yields the adjacency matrix of a (loopless) graph in the scheme.
\end{defn}
It is not hard to see that
\[
A_SA_T
= \Bigl(\sum_{i\in S}A_i\Bigr)\Bigl(\sum_{j\in T}A_j\Bigr)
= \sum_{i\in S}\sum_{j\in T} A_iA_j
= \sum_{i\in S}\sum_{j\in T}\sum_{k=0}^d p_{ij}^k\,A_k
= \sum_{k=0}^d c_k\,A_k,
\]
where
\begin{equation}\label{eq:coefficients}
c_k \;:=\; \sum_{i\in S}\sum_{j\in T} p_{ij}^k \qquad (0\le k\le d).
\end{equation}
\begin{prop}
\label{prop:structure-test}
For $S,T\subseteq\{1,\dots,d\}$ the following are equivalent:
\begin{enumerate}[(i)]
\item There exists $U\subseteq\{1,\dots,d\}$ with $A_SA_T=A_U$ (i.e., an MPF with no loops).
\item The coefficients in (\ref{eq:coefficients})  satisfy
$c_k\in\{0,1\}\ \text{for all}\ k=1,\dots,d$,
and $c_0=0$.
In that case $U=\{\,k\in\{1,\dots,d\}: c_k=1\,\}$.
\end{enumerate}
\end{prop}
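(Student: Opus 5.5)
The whole argument rests on the expansion already displayed before the statement, namely $A_SA_T=\sum_{k=0}^d c_kA_k$ with $c_k$ as in \eqref{eq:coefficients}. We combine it with the fact that $A_0,A_1,\dots,A_d$ are linearly independent. Indeed, they are $0$-$1$ matrices with pairwise disjoint supports whose union is all of $X\times X$, because the relations $R_0,\dots,R_d$ partition $X\times X$. So the expression of any element of the Bose--Mesner algebra in the basis $\{A_k\}$ is unique. Moreover, the $(x,y)$-entry of $\sum_k c_kA_k$ is simply $c_k$ for the unique $k$ with $(x,y)\in R_k$. Here every relation is nonempty, so each coefficient is actually read off at some entry.

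For (ii)$\Rightarrow$(i), we set $U=\{k\in\{1,\dots,d\}: c_k=1\}$. Since $c_0=0$ and every other $c_k$ lies in $\{0,1\}$, the sum collapses to $A_SA_T=\sum_{k\in U}A_k=A_U$. This is exactly an MPF with $U\subseteq\{1,\dots,d\}$.

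For (i)$\Rightarrow$(ii), suppose $A_SA_T=A_U$ with $U\subseteq\{1,\dots,d\}$. Then $\sum_{k=0}^d c_kA_k=\sum_{k=0}^d \mathbf{1}_{U}(k)A_k$, where $\mathbf{1}_U(0)=0$. Uniqueness of coordinates in the basis $\{A_k\}$ gives $c_k=\mathbf{1}_U(k)$ for every $k$. Hence $c_0=0$, each $c_k\in\{0,1\}$ for $k\ge1$, and $U$ is recovered as $\{k\ge1: c_k=1\}$. The same coordinate comparison shows that this $U$ is the only one possible, which justifies the final sentence of the statement.

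There is no real obstacle here; the only point needing care is the linear independence of the $A_k$. If one prefers to avoid linear algebra, the entrywise argument does the job: choose $(x,y)\in R_k$ and compare the $(x,y)$-entries of both sides. We also note that the symmetry of the scheme is not used.
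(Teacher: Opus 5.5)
Your proposal is correct and follows the paper's proof. You expand $A_SA_T=\sum_k c_kA_k$, compare coefficients in the basis $\{A_0,\dots,A_d\}$ for (i)$\Rightarrow$(ii), and collapse the sum for (ii)$\Rightarrow$(i), adding an explicit justification of linear independence from the disjoint supports that the paper takes for granted.
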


\begin{proof}
$(i)\Rightarrow(ii)$: If $A_SA_T=A_U=\sum_{k\in U}A_k$ with $0\notin U$, then comparing the $A_k$–coefficients (the $A_k$'s are a basis) shows $c_k=1$ for $k\in U$ and $c_k=0$ otherwise; in particular $c_0=0$.

$(ii)\Rightarrow(i)$: If $c_k\in\{0,1\}$ for all $k$ and $c_0=0$, then
\[
A_SA_T=\sum_{k=1}^d c_k\,A_k=\sum_{k\in U}A_k
\]
with $U=\{k\ge 1: c_k=1\}$, which is the adjacency matrix of a union of relations, i.e., a loopless graph in the scheme.
\end{proof}

\begin{rem}
\label{rem:unique-2step}
By the defining property of $p_{ij}^k$, the number of $S\!\to\!T$ two–step walks from $x$ to $y$ depends only on the relation class $R_k$ containing $(x,y)$ and equals $c_k=\sum_{i\in S,j\in T}p_{ij}^k$.
Thus the condition $c_k\le 1$ for all $k$ says:
\emph{for every ordered pair $(x,y)$ there is \emph{at most one} intermediate $z$ and choices $i\in S$, $j\in T$ with $(x,z)\in R_i$, $(z,y)\in R_j$.}
The additional requirement $c_0=0$ forbids loops.
\end{rem}

Let $E_0,\dots,E_d$ be the primitive idempotents of the Bose–Mesner algebra, with $E_0 = \frac{1}{|X|} J$ corresponding to the valency map.  Let $P=(P_{h i})$ be the first eigenmatrix, so that
\[
A_i E_h \;=\; P_{h i}\,E_h \qquad(0\le h,i\le d),\quad\text{and}\quad
A_i \;=\; \sum_{h=0}^d P_{h i}\,E_h.
\]
For an index set $S\subseteq\{0,\dots,d\}$ define
\[
\lambda_h(S) \;:=\; \sum_{i\in S} P_{h i},
\qquad\text{so that}\qquad
A_S E_h \;=\; \lambda_h(S)\,E_h,
\]
and similarly for $T$ and any $U$.

\begin{prop}
\label{prop:spectral-test}
Let $S,T,U\subseteq\{1,\dots,d\}$.  Then the following are equivalent:
\begin{enumerate}[(i)]
\item $A_SA_T=A_U$.
\item For all $h=0,1,\dots,d$, we have
$\lambda_h(S)\lambda_h(T)=\lambda_h(U)$.
\end{enumerate}
Equivalently,
\[
A_SA_T = A_U
\quad\Longleftrightarrow\quad
\bigl(\forall h\bigr)\ 
\sum_{i\in S}P_{h i}\cdot \sum_{j\in T}P_{h j}
=
\sum_{k\in U}P_{h k}.
\]
Consequently, for fixed $S,T$, there exists a loopless MPF
$A_SA_T=A_U$ if and only if there exists
$U\subseteq\{1,\dots,d\}$ satisfying these equivalent conditions.
\end{prop}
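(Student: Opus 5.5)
The plan is to pass through the spectral decomposition of the Bose--Mesner algebra. The primitive idempotents $E_0,\dots,E_d$ satisfy $E_hE_{h'}=\delta_{hh'}E_h$ and are linearly independent, so they form a basis of $\mathbb{C}[\mathfrak{X}]$. First I will express each union matrix in this basis. From $A_i=\sum_h P_{hi}E_h$ and linearity, summing over $i\in S$ gives
\[
A_S=\sum_{h=0}^d \lambda_h(S)\,E_h,
\]
and likewise for $A_T$ and $A_U$. This is also valid when some index set is empty, with $\lambda_h(\emptyset)=0$.

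Next I will multiply the two expansions. Orthogonality and idempotence of the $E_h$ collapse the double sum to
\[
A_SA_T=\sum_{h,h'}\lambda_h(S)\lambda_{h'}(T)\,E_hE_{h'}=\sum_{h=0}^d \lambda_h(S)\lambda_h(T)\,E_h .
\]
Both $A_SA_T$ and $A_U$ are now written in the basis $\{E_h\}$. By uniqueness of coordinates in a basis, $A_SA_T=A_U$ holds exactly when $\lambda_h(S)\lambda_h(T)=\lambda_h(U)$ for every $h$. This gives (i)$\Leftrightarrow$(ii). Writing out the definition of $\lambda_h(\cdot)$ gives the displayed reformulation in terms of the entries of $P$.

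For the final \emph{consequently} sentence, note that by definition a loopless MPF with factors $A_S,A_T$ is an identity $A_SA_T=A_U$ with $U\subseteq\{1,\dots,d\}$. Its existence is therefore equivalent to the existence of such a $U$ satisfying (i), hence (ii). The spectral condition applied to $h=0$ gives $k(S)k(T)=k(U)$ as a first check.

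No step is a serious obstacle. The only point needing care is justifying that the $E_h$ form a basis with the stated product rule. This is the standard fact that a commutative semisimple algebra of dimension $d+1$ has $d+1$ orthogonal primitive idempotents spanning it, which the paper takes as given when it introduces the first eigenmatrix $P$.
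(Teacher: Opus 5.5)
Your proposal is correct and follows the paper's argument: expand $A_S$, $A_T$, $A_U$ in the primitive idempotent basis, use $E_hE_{h'}=\delta_{hh'}E_h$ to obtain $A_SA_T=\sum_h\lambda_h(S)\lambda_h(T)E_h$, and compare coefficients. The only cosmetic difference is that the paper proves the forward direction by multiplying both sides by each $E_h$, whereas you get both directions at once from uniqueness of coordinates in the basis $\{E_h\}$.
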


\begin{proof}
Since $A_SE_h=\lambda_h(S)E_h$ and $A_TE_h=\lambda_h(T)E_h$, we have
\[
A_SA_T E_h
=
A_S\bigl(\lambda_h(T)E_h\bigr)
=
\lambda_h(T)A_SE_h
=
\lambda_h(S)\lambda_h(T)E_h.
\]
If $A_SA_T=A_U$, then $A_UE_h=\lambda_h(U)E_h$, so comparison on each
primitive idempotent gives
$\lambda_h(S)\lambda_h(T)=\lambda_h(U)$, where 
$(0\le h\le d)$.

Conversely, if these equalities hold for all $h$, then
\[
A_SA_T
=
\sum_{h=0}^d \lambda_h(S)\lambda_h(T)E_h
=
\sum_{h=0}^d \lambda_h(U)E_h
=
A_U,
\]
using the primitive idempotent basis of the Bose--Mesner algebra.
\end{proof}

Note that the eigenvalue of each $A_i$ corresponding to $E_0 = |X|^{-1}J$ is the valency $k_i$. For an index set $S\subseteq\{1,\dots,d\}$ write
\[
A_S:=\sum_{i\in S}A_i,\qquad k(S):=\text{the common row sum of }A_S=\sum_{i\in S}k_i = \lambda_0(A_S).
\]

\begin{cor}\label{cor:valency-multiplicativity}
If $A_SA_T=A_U$ for some $S,T,U\subseteq\{1,\dots,d\}$ (with $0\notin S\cup T\cup U$), then
$k(U)=k(S)\,k(T)$.
Furthermore, if the association scheme has $v$ vertices, then $k(S)+k(T) \le v-1$.  
\end{cor}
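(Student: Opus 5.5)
The plan is to get the identity $k(U)=k(S)\,k(T)$ by evaluating the factorization on the all-ones vector. The inequality $k(S)+k(T)\le v-1$ then follows from an elementary inequality, with a separate argument for the boundary case.

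\emph{Multiplicativity.} Since $A_S\mathbf{1}=k(S)\mathbf{1}$ and $A_T\mathbf{1}=k(T)\mathbf{1}$, we get
\[
A_U\mathbf{1}=A_SA_T\mathbf{1}=k(T)\,A_S\mathbf{1}=k(S)k(T)\mathbf{1}.
\]
Comparing with $A_U\mathbf{1}=k(U)\mathbf{1}$ gives $k(U)=k(S)k(T)$. Equivalently, this is the case $h=0$ of \Cref{prop:spectral-test}, because $\lambda_0(\cdot)=k(\cdot)$.

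\emph{Degree-sum bound, easy cases.} If $S=\emptyset$ or $T=\emptyset$, the bound is immediate: the remaining set avoids $0$, so its valency is at most $v-1$. Assume now that $S,T$ are both nonempty, so $a:=k(S)\ge 1$ and $b:=k(T)\ge 1$. Then $(a-1)(b-1)\ge 0$ gives
\[
a+b\le ab+1=k(U)+1.
\]
Since $0\notin U$, the relations in $U$ are disjoint from the diagonal, so $k(U)\le v-1$. Together these give $a+b\le v$.

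\emph{Excluding $a+b=v$.} This is the only real obstacle. Equality would force both $(a-1)(b-1)=0$ and $k(U)=v-1$. Suppose, say, $a=1$; then $b=v-1$. Since $T\subseteq\{1,\dots,d\}$ and $A_T$ has row sums $v-1$, we must have $A_T=J-I$. Then
\[
A_SA_T=A_S(J-I)=k(S)J-A_S,
\]
whose diagonal entries equal $k(S)=1\neq 0$, because $A_S$ has zero diagonal. This contradicts $c_0=0$ from \Cref{prop:structure-test}.

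The case $b=1$ is symmetric: there $A_S=J-I$ and $A_SA_T=k(T)J-A_T$, which again has nonzero diagonal. Hence $a+b\le v-1$.
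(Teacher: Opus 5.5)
Your proof is correct and follows essentially the paper's route: you get multiplicativity by evaluating on $\mathbf{1}$ (the $h=0$ case of the spectral test), then combine a product bound on $k(S)+k(T)$ with a no-loop argument when one factor has valency $1$. The differences are cosmetic. You fold the paper's case split into the single inequality $(a-1)(b-1)\ge 0$ and exclude equality by forcing the other factor to be $J-I$, whereas the paper notes directly that the matching $A_S$ and $A_T$ share no edge, so $k(T)\le v-2$. You also treat empty $S$ or $T$, which the paper tacitly ignores.
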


\begin{proof} 
That $A_S A_T = A_U$ implies $k(S) \, k(T) = k(U)$ is immediate from the $\lambda_0$ case of \Cref{prop:spectral-test}.  
Since $A_U$ is the adjacency matrix of a $k(U)$-regular graph on $v$ vertices, it can be at most $(v-1)$-regular, with equality if and only if $A_U = J-I$. 

For the second statement, suppose $1 \le k(S) \le k(T)$.  If $1 < k(S), k(T)$, then $k(S) + k(T) \le k(S)\, k(T)$, so we are done.  If $1 = k(S) \le k(T)$, then $A_S$ is the adjacency matrix of a $1$-regular graph (i.e. a perfect matching) on $v$ vertices, so $v$ must be even, and $A_T$ is the adjacency matrix of a $k(T)$-regular graph on these same $v$ vertices.  These two graphs cannot have an edge in common, because if they did the graph whose adjacency matrix is $A_U$ would have a loop. So $A_T $ is the adjacency matrix of a graph which is at most $(v-2)$-regular, and we have $k(S) + k(T) \le v-1$, as required. 
\end{proof}
So by \Cref{prop:spectral-test}, we have that 
$\lambda_0(A_S)\lambda_0(A_T) = \lambda_0(A_U)$ gives a necessary condition on valencies for any matrix product factorization in an association scheme:  
\begin{equation}\label{eq:valencies}
A_S A_T = A_U \implies \big(\sum_{i \in S} k_i \big) \big(\sum_{j \in T} k_j\big) = \big(\sum_{\ell \in U} k_{\ell}\big).
\end{equation}

\begin{rem}
\label{rem:subset-sum}
Writing $u=\mathbf{1}_U\in\{0,1\}^{d+1}$ and similarly $s=\mathbf{1}_S$, $t=\mathbf{1}_T$, the equalities in \Cref{prop:spectral-test} say
\[
P\,u \;=\; (P\,s)\odot(P\,t),
\]
where $\odot$ denotes entrywise (Hadamard) product.
Thus deciding whether there \emph{exists} $U$ with $A_SA_T=A_U$ is a discrete \emph{subset–sum factorization} problem on the rows of $P$: the vector $(P\,s)\odot(P\,t)$ must itself be a \emph{subset sum of columns} of $P$ (with coefficients in $\{0,1\}$), and to be loopless we must have $0\notin U$.
\end{rem}

\begin{rem}
If you allow loops, you may include $0\in S$ or $0\in T$, and $U$ may contain $0$.
For loopless MPFs, assume $S,T\subseteq\{1,\dots,d\}$ and enforce either of the equivalent conditions
\[
c_0=\sum_{i\in S}\sum_{j\in T}p_{ij}^0=0
\qquad\Longleftrightarrow\qquad
0\notin U.
\]
Note that $A_0=I$ corresponds to the constant column $P_{\bullet,0}\equiv 1$ in the eigenmatrix.
\end{rem}

\begin{rem}
If $A_1 \ne A_0$ is an adjacency matrix in a symmetric association scheme with valency $1$, then $A_1$ is a symmetric permutation matrix, so it is diagonalizable with eigenvalues $1$ and $-1$, both of which occur. Therefore, $A_1^2 = I = A_0$ and $A_1J = J$, so $\sum_{j > 1} A_j = J - I - A_1$ satisfies 
$$A_1(J-I-A_1) = (J-I-A_1).$$   
\end{rem}

\begin{defn}
We will say that a matrix product factorization in a symmetric association scheme is \defin{trivial} when one of the factors has valency $1$.  
\end{defn}


\section{SRGs}\label{sec:srg}

Suppose $A_1$ is the adjacency matrix of a strongly-regular graph $X$.  We can assume the association scheme $S$ corresponding to this strongly-regular graph has adjacency matrices $A_0=I$, $A_1$, and $A_2$, where $A_2$ is the adjacency matrix of the complementary strongly-regular graph.  The eigenvalues of $A_1$ can be assumed to be $k \ge r \ge 0 > -1 \ge s$, and the eigenvalues of $A_2$ are $\ell$, $-1-r$, and $-1-s$, with the same eigenspaces, respectively.  

The only non-zero graphs with no loops in this $2$-class scheme have adjacency
matrices $A_1, A_2, A_1+A_2=J-I$.
Suppose $XY=Z$ is a loopless MPF of graphs in this scheme, with
$X,Y,Z\in\{A_1,A_2,A_1+A_2\}$.  We first rule out all products except
$A_1A_2$.

Indeed, $A_1^2$ has diagonal coefficient $k$, and $A_2^2$ has diagonal
coefficient $\ell$.  Hence neither $A_1^2$ nor $A_2^2$ can be loopless.
Also, if one of the factors is $A_1+A_2=J-I$, then
\[
(A_1+A_2)A_1=(J-I)A_1=kJ-A_1
\]
has diagonal coefficient $k$, and $(A_1+A_2)A_2=(J-I)A_2=\ell J-A_2$
has diagonal coefficient $\ell$.  Finally,
$(A_1+A_2)^2=(J-I)^2=(v-1)I+(v-2)(A_1+A_2)$
also has non-zero diagonal coefficient.  Therefore, the only possible loopless
MPFs in this $2$-class scheme are of the form
$A_1A_2=A_1$, $A_1A_2=A_2$, or $A_1A_2=A_1+A_2$.

The first two cases are trivial.  If $A_1A_2=A_1$, then comparing valencies
gives $k\ell=k$,
so $\ell=1$.  Thus $A_2$ has valency $1$.  Similarly, if
$A_1A_2=A_2$, then $k\ell=\ell$,
so $k=1$, and $A_1$ has valency $1$.  Hence both cases are trivial in
the sense of our definition.

It remains to consider the only possible nontrivial case, $A_1A_2=A_1+A_2$.
We now show that this forces the scheme to be the $2$-class scheme of the
$5$-cycle.

\begin{thm} Suppose $A_1$, $A_2$ are the adjacency matrices of complementary strongly-regular graphs. If $A_1 A_2 = A_1 + A_2$, 
then $A_1$ is the adjacency matrix of a $5$-cycle.  
\end{thm}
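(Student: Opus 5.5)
The plan is to combine the valency identity of \Cref{cor:valency-multiplicativity} with the spectral test \Cref{prop:spectral-test} applied to the three eigenspaces of the scheme. Write $v$ for the number of vertices, $k$ for the valency of $A_1$, and $\ell=v-1-k$ for the valency of $A_2$.

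\emph{Step 1 (valencies).} Evaluating $A_1A_2=A_1+A_2$ on $E_0$ gives $k\ell=k+\ell=v-1$. Equivalently $(k-1)(\ell-1)=1$. Since $k,\ell$ are positive integers, this forces $k=\ell=2$ and hence $v=5$.

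\emph{Step 2 (identify the graph).} $A_1$ is a $2$-regular strongly regular graph on $5$ vertices, and its complement is also $2$-regular. A $2$-regular graph on $5$ vertices is a disjoint union of cycles, each of length at least $3$. The only possibilities are $C_5$ and $C_3\sqcup C_2$, and the latter is impossible since a cycle needs at least $3$ vertices. So $A_1$ is the adjacency matrix of $C_5$, and so is $A_2$, being the complement of $C_5$.

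\emph{Step 3 (consistency check).} As a sanity check one can run the nontrivial eigenspaces. For an eigenvalue $\theta\in\{r,s\}$ of $A_1$, the corresponding eigenvalue of $A_2$ is $-1-\theta$, and the spectral test requires $\theta(-1-\theta)=-1$, i.e.\ $\theta^2+\theta-1=0$. This gives $\theta=(-1\pm\sqrt5)/2$, which are exactly the eigenvalues of $C_5$. This is not needed for the proof, but it confirms the result. One also checks directly that $A_1A_2=A_1+A_2$ holds for $C_5$: in $C_5$, each ordered pair of distinct vertices is joined by exactly one (edge, non-edge) two-step walk, and no such walk is closed.

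\emph{Main obstacle.} There is no real obstacle; the only point needing care is the integrality argument in Step~1. That argument uses that $k,\ell\ge1$, which holds because both graphs in a $2$-class scheme are nonempty.
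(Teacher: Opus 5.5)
Your proof is correct, and its first step matches the paper's: the row-sum ($E_0$) identity gives $k\ell=k+\ell$, hence $k=\ell=2$. You diverge in how you identify the graph. The paper projects onto the $r$- and $s$-eigenspaces to get $\theta(-1-\theta)=-1$, so $r,s=(-1\pm\sqrt5)/2$. It then asserts, without proof, that the only strongly regular graph with these eigenvalues is the $5$-cycle. You instead use complementarity to get $v=k+\ell+1=5$ and classify $2$-regular graphs on $5$ vertices directly, keeping the eigenvalue computation only as a sanity check.

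Your route is elementary and self-contained. It uses only that $A_1,A_2$ are complementary regular graphs, with no appeal to strong regularity or to a spectral characterization. It also makes explicit the fact $v=5$, which the paper leaves implicit. That observation is the natural justification of the paper's final ``only SRG with these eigenvalues'' claim, so once $k=\ell=2$ is known, the paper's eigenvalue step is not logically needed. The paper's route has the merit of staying inside the spectral-test framework and exhibiting the irrational eigenvalues that single out the pentagon.

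One small wording fix: $C_3\sqcup C_2$ is not a candidate at all, since $C_2$ is not a simple graph. It is cleaner to say that the cycle lengths form a partition of $5$ into parts $\ge 3$, which forces a single $5$-cycle.
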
 
\begin{proof} Considering valencies, $A_1A_2 = A_1 + A_2$ implies $k \ell = k + \ell$ for positive integers $k$ and $\ell$, so the only possibility is $k=\ell=2$.  Projecting to the $r$- and $s$-eigenspaces of $A_1$ we see that $r(-1-r) = -1 = s(-1-s)$, so $r$ and $s$ are distinct roots of the polynomial $x^2+x-1$.  Thus $r,s = \frac{-1 \pm \sqrt{5}}{2}$.  The only strongly-regular graph with these eigenvalues is the $5$-cycle (and its complement).  
\end{proof}

\section{DRGs}\label{sec:DRG}

Let $XY=Z$ be a matrix factorization of graphs in an association scheme.  Suppose $X$ is a distance-regular graph, and $Y$ and $Z$ are graphs in the association scheme generated by $X$.  Then $Y$ and $Z$ are unions of distance graphs in the association scheme.  
The intersection matrix $B_1$ of $X$ is determined by its intersection array 
$$[b_0,b_1,\dots,b_{d-1};1,c_2,\dots,c_d],$$ 
in that $B_1$ is tridiagonal, with all row sums equal to $b_0$, the valency of $X$.  Furthermore, the $b_i$ are positive and decreasing, the $c_i$ are positive and increasing.  A standard reference for these properties is \cite{BCN}. 

Suppose $Y$ is a distance matrix for $X$, so its adjacency matrix is $A_i$.  If $A_1A_i$ is a graph in the association scheme, it has to be that either (i) $2 \le i<d$ and $A_1A_i = A_{i-1} + A_{i+1}$; (ii) $2 \le i<d$ and $A_1A_i = A_{i-1} + A_i + A_{i+1}$; (iii) $A_1 A_d = A_{d-1}+A_d$, or (iv) $A_1 A_d = A_{d-1}$.
\begin{thm} 
The following holds:
\begin{enumerate}[(i)]
    \item Suppose $2\le i<d$ and $A_1A_i=A_{i-1}+A_{i+1}$.
    Then $b_0=2$ and $X$ is an $n$-cycle.

    \item Suppose $2\le i<d$ and $A_1A_i=A_{i-1}+A_i+A_{i+1}$.
    Then $b_0=3$, $b_j=1$ for $j=i-1,\dots,d-1$, and
    $c_k=1$ for $k=1,\dots,i+1$.
    \item Suppose $i=d$ and $A_1A_d = A_{d-1}+A_d$.  Then $b_{d-1} = 1$ and $c_d = b_0-1$. 
    \item Suppose $i=d$ and $A_1A_d=A_{d-1}$.  Then $b_{d-1}=1,  a_d=0, c_d=b_0$.
    Equivalently, this case occurs exactly when $b_{d-1}=1$ and $a_d=0$.
\end{enumerate}

\end{thm}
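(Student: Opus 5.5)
The plan is to use the three-term recurrence for distance matrices of a distance-regular graph,
\[
A_1A_i = b_{i-1}A_{i-1} + a_iA_i + c_{i+1}A_{i+1}\qquad (1\le i\le d),
\]
with the conventions $c_{d+1}A_{d+1}=0$ and $a_i=b_0-b_i-c_i$. Since $A_0,\dots,A_d$ form a basis, \Cref{prop:structure-test} lets us compare coefficients in each of the four cases. This gives equalities among $b_{i-1},a_i,c_{i+1}$ and the values $0,1$. We then propagate these equalities using the monotonicity $b_0> b_1\ge b_2\ge\cdots\ge b_{d-1}\ge 1$ and $1=c_1\le c_2\le\cdots\le c_d$, together with $a_j=b_0-b_j-c_j$.

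\textbf{Case (i).} Comparing coefficients gives $b_{i-1}=1$, $a_i=0$ and $c_{i+1}=1$. Monotonicity of the $c$'s then forces $c_j=1$ for all $j\le i+1$, and $b_j=1$ for all $j\ge i-1$. Hence $b_0=b_i+c_i+a_i=1+1+0=2$, since $i\ge 2$ gives $c_i=1$ and $i\le d-1$ gives $b_i=1$. So $X$ is a connected $2$-regular graph, that is, a cycle.

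\textbf{Case (ii).} The same argument applies with $a_i=1$ in place of $a_i=0$. We get $b_{i-1}=1=c_{i+1}$, hence $b_j=1$ for $j=i-1,\dots,d-1$ and $c_k=1$ for $k\le i+1$. Then $b_0=b_i+a_i+c_i=1+1+1=3$.

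\textbf{Case (iii).} The recurrence for $i=d$ has no $A_{d+1}$ term. Comparing coefficients gives $b_{d-1}=1$ and $a_d=1$, so $c_d=b_0-a_d=b_0-1$ (using $b_d=0$).

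\textbf{Case (iv).} Comparing coefficients gives $b_{d-1}=1$ and $a_d=0$, so $c_d=b_0$. For the converse, if $b_{d-1}=1$ and $a_d=0$, the recurrence reads $A_1A_d=A_{d-1}$ directly. This is a loopless MPF because $d\ge 2$ means $0\notin\{d-1\}$, or more generally because $c_0$ is the coefficient of $A_0$, which is $0$.

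The only delicate point is the monotonicity step in cases (i) and (ii). There we need $i-1\ge 1$ to use $b_1\ge\cdots$ rather than $b_0$, which holds because $i\ge 2$. We also need $i\le d-1$ so that $b_i$ is defined and positive. I would check these index ranges carefully. Beyond that, the argument is bookkeeping.
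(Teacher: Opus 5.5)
Your proposal is correct and follows essentially the same route as the paper: read off $b_{i-1},a_i,c_{i+1}$ (respectively $b_{d-1},a_d$) from the three-term recurrence by comparing coefficients, then use the monotonicity of the $b_j$ and $c_j$ together with the row-sum identity $b_j+a_j+c_j=b_0$. The index caveat you flag is harmless, since $b_0>b_1$ would serve just as well in the monotonicity chain.
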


\begin{proof} 
(i) Assume $2\le i<d$ and $A_1A_i=A_{i-1}+A_{i+1}$.
Then $b_{i-1}=1, a_i=0, c_{i+1}=1$.
Since the $b_j$'s are positive and non-increasing, $b_i=1$.  Since the
$c_j$'s are positive and non-decreasing, $c_i=1$.  Hence $b_0=b_i+a_i+c_i=1+0+1=2$.
The only distance-regular graphs with valency $2$ are cycles, so $X$ is an
$n$-cycle.

(ii) Assume $2\le i<d$ and $A_1A_i=A_{i-1}+A_i+A_{i+1}$.
Then $b_{i-1}=1, a_i=1, c_{i+1}=1$.
Again, monotonicity gives $b_i=1$ and $c_i=1$, so $b_0=b_i+a_i+c_i=1+1+1=3$.
Moreover, since the $b_j$'s are positive and non-increasing, $b_j=1$ for
$j=i-1,\dots,d-1$.  Since the $c_j$'s are positive and non-decreasing,
$c_k=1$ for $1\le k\le i+1$.

(iii) If $A_1A_d = A_{d-1} + A_d$, then
$b_{d-1}=1$ and $a_d=1$.
Since $b_d=0$, the row-sum identity at distance $d$ is $a_d+c_d=b_0$.
Hence $c_d=b_0-1$.
The standard monotonicity conditions on the intersection numbers give no
additional immediate restrictions from these equalities.  Of course, the
intersection array must still satisfy the usual feasibility conditions for
distance-regular graphs.  There are DRGs with these conditions, see the examples
that follow.

(iv) In a distance-regular graph we have
$A_1A_d=b_{d-1}A_{d-1}+a_dA_d$,
since $A_{d+1}=0$.  Therefore $A_1A_d=A_{d-1}$
holds if and only if $b_{d-1}=1$ and $a_d=0$.
At distance $d$, we have $b_d=0$, and the row-sum identity for the
intersection numbers is $b_d+a_d+c_d=b_0$.
Hence $a_d+c_d=b_0$.
Since $a_d=0$, it follows that $c_d=b_0$.

(Note that bipartiteness alone is not sufficient for case (iv): bipartiteness gives
$a_d=0$, but the additional condition $b_{d-1}=1$ is also required.
For example, the complete bipartite graph $K_{m,m}$ with $m>2$ is
distance-regular and bipartite with diameter $2$, but $A_1A_2=(m-1)A_1$,
so it does not satisfy $A_1A_2=A_1$.)
\end{proof}

\begin{exa}
We find one intersection array of DRGs satisfying case (ii) of the theorem in \cite{BCN}, it is for the $L_2(17)$ graph, of order $102$ and diameter $d=7$, with intersection array 
$$ [3,2,2,2,1,1,1;1,1,1,1,1,1,3].$$
It satisfies (ii) for exactly one $i$:  $A_1A_5 = A_4 + A_5 + A_6$. 

For DRGs satisfying the condition of case (iii), we find two small examples in \cite{BCN}: the $5$-cycle from the previous section, and the Coxeter graph of order $28$ and diameter $d=4$, with intersection array
$$ [3,2,2,1;1,1,1,2].$$
\end{exa}

\begin{exa} 
For examples of MPFs in association schemes that do not arise from a DRG, symmetric cyclotomic schemes are a good source.  These schemes arise as fusions of the thin schemes of an abelian group on the orbits of a subgroup of its automorphism group that contains the inversion automorphism.  For example, consider the cyclotomic scheme formed by the fusion of $C_{17} = \langle g \rangle$ on the orbits of the subgroup of $\aut (C_{17})$ of order $4$.  This subgroup is generated by the automorphism $g \mapsto g^4$, whose square is inversion.  Doing our calculations in the group algebra $\mathbb{Q}C_{17}$, we see that both 
$$ (g+g^4+g^{-1}+g^{-4})(g^2+g^8+g^{-2}+g^{-8}) = (g^3+g^{-5}+g^{-3}+g^{5})(g^6+g^7+g^{-6}+g^{-7}) = \sum_{i=1}^{16} g^i.$$
So these will correspond to MPFs in this symmetric scheme.  (In this specific example the MPFs also correspond to near-factorizations but that is not typically the case.) 

In the classification of small association schemes, we find more MPFs of this type.  The three nonisomorphic pseudocyclic association schemes of order $25$ with six basis elements of valency $4$ and the pseudocyclic association scheme of order $29$ with seven elements of valency $4$ are symmetric cyclotomic schemes admitting MPFs similar to the above, with the product of two adjacency matrices of valency $4$ being equal to the sum of four distinct basis elements.  

For a product in a symmetric cyclotomic scheme to give a loopless MPF, it is not enough that the product be a sum of distinct basic adjacency matrices: the identity relation must not occur.  Equivalently, the expansion must have coefficient $0$ on $A_0$, and coefficients $0$ or $1$ on all nonidentity basic adjacency matrices.  In symmetric fusions of cyclic groups of odd prime order, this phenomenon happens frequently.


\end{exa}

\section{MPFs in non-symmetric homogeneous coherent configurations}\label{sec:hom}

A \defin{homogeneous} coherent configuration on $v$ vertices satisfies all the defining properties of an association scheme except commutativity.  Let $\{I=A_0,A_1,\dots,A_d\}$ be the adjacency matrices of the relations in a homogeneous coherent configuration.  When the configuration is not commutative, so not an association scheme, some of the adjacency matrices will be non-symmetric.  For all $i \in \{1,\dots,d\}$, let $i' \in \{1,\dots,d\}$ be the index for which $A_i^{\top} = A_{i'}$.  If $S \subseteq \{1,\dots,d\}$, in order for $A_S$ to be a symmetric matrix, it must be the case that $i \in S \iff i' \in S$.  

Let $\mathcal{A}\subseteq M_v(\mathbb{C})$ be the adjacency algebra of a homogeneous coherent configuration.  $\mathcal{A}$ is a finite‐dimensional semisimple $*$–algebra (with $*$ the conjugate transpose on $M_v(\mathbb{C})$). By Wedderburn, there exist integers $m_\rho,n_\rho\ge1$ and a $*$–isomorphism
\[
\Phi:\ \mathcal{A}\ \xrightarrow{\ \cong\ }\ \bigoplus_{\rho=0}^{r}\Bigl(I_{m_\rho}\otimes M_{n_\rho}(\mathbb{C})\Bigr),
\]
sending $X\in\mathcal{A}$ to the block tuple $\Phi(X)=(X_\rho)_{\rho=0}^r$ with $X_\rho\in M_{n_\rho}(\mathbb{C})$ and
\[
X\ =\ \bigoplus_{\rho=0}^{r}\bigl(I_{m_\rho}\otimes X_\rho\bigr),\qquad
\operatorname{tr}(XY)\ =\ \sum_{\rho=0}^{r} m_\rho\,\operatorname{tr}(X_\rho Y_\rho).
\]
The block $\rho=0$ is the \emph{principal} block.  For every union of
non-identity basic relations
\[
A_S=\sum_{i\in S}A_i,\qquad S\subseteq\{1,\dots,d\},
\]
we write
\[
k(S)=\sum_{i\in S}k_i.
\]
Then $A_S\mathbf{1}=k(S)\mathbf{1}$.  In a homogeneous coherent
configuration the column sum of $A_S$ is also $k(S)$, since the transpose
of each basic relation is again a basic relation with the same valency.

In this section, an MPF in a homogeneous coherent configuration is understood in the relational sense: $A_SA_T=A_U$, where
$S,T,U\subseteq\{1,\dots,d\}$, the product is a $0$-$1$ union of
non-identity basic relations, and no symmetry of $A_S,A_T,A_U$ is required.
Thus this is a directed version of the graph MPF considered in symmetric
association schemes.

\begin{lem}\label{lem:G2-valency}
Let $S,T\subseteq\{1,\dots,d\}$ be nonempty.  If $A_SA_T=A_U$
is a loopless MPF in the adjacency algebra of a homogeneous coherent
configuration on $v$ vertices, then
$1\le k(S),k(T)$, and  $k(S)+k(T)\le v-1$,
and $k(U)=k(S)k(T)\le v-1$.
\end{lem}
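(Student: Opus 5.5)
The plan is to adapt the proof of \Cref{cor:valency-multiplicativity} to the non-commutative setting, using only row sums and the absence of loops; the spectral test is not needed.

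First, the lower bounds. Since $S$ and $T$ are nonempty and every basic relation $A_i$ with $i\ge1$ has positive valency $k_i$, we get $k(S),k(T)\ge1$.

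Second, the multiplicativity. Multiply $A_SA_T=A_U$ on the right by $\mathbf{1}$. This gives $A_SA_T\mathbf{1}=k(T)A_S\mathbf{1}=k(S)k(T)\mathbf{1}$, while $A_U\mathbf{1}=k(U)\mathbf{1}$, so $k(U)=k(S)k(T)$. Because $U\subseteq\{1,\dots,d\}$ and $A_U$ is a $0$-$1$ matrix with zero diagonal, each row has at most $v-1$ ones. Hence $k(U)\le v-1$.

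Third, the bound $k(S)+k(T)\le v-1$, split by cases.
\begin{itemize}
\item If $k(S),k(T)\ge2$, then $k(S)+k(T)\le k(S)k(T)=k(U)\le v-1$, using $(a-1)(b-1)\ge1$.
\item If $k(S)=1$, then $A_S$ is a $0$-$1$ matrix with row sums $1$ and, by the remark on column sums in homogeneous configurations, column sums $1$. So $A_S$ is a permutation matrix $P_\sigma$ with $\sigma$ fixed-point free. In this case $A_SA_T$ is a row permutation of $A_T$, and its diagonal entry at $x$ equals $(A_T)_{\sigma(x),x}$. Looplessness forces $(A_T)_{\sigma(x),x}=0$ for all $x$. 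Thus in column $x$ of $A_T$, both the diagonal entry and the entry in row $\sigma(x)\ne x$ vanish. Since column sums of $A_T$ equal $k(T)$, this gives $k(T)\le v-2$, and hence $k(S)+k(T)\le v-1$.
\item If $k(T)=1$, the argument is symmetric. Now $A_T$ is a fixed-point-free permutation matrix, and looplessness of $A_SA_T$ kills one off-diagonal entry in each row of $A_S$, so $k(S)\le v-2$ from the row sums.
\end{itemize}

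The main obstacle is the valency-$1$ case. Without symmetry, one cannot say that $A_S$ is a matching, so the argument must use the transpose-closure of the configuration: it gives equal row and column sums, which is what makes a valency-$1$ relation a permutation matrix and makes the column count work. The orientation of which entry is forced to vanish must be tracked carefully; everything else is routine.
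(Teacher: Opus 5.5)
Your proof is correct. The lower bounds and the identity $k(U)=k(S)k(T)\le v-1$ are obtained exactly as in the paper, via row sums and the all-ones vector.

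The difference lies in the degree-sum bound. You follow the case split of the symmetric corollary: when $k(S),k(T)\ge 2$ you use $k(S)+k(T)\le k(S)k(T)=k(U)\le v-1$, and when one factor has valency $1$ you analyse it as a permutation matrix. The paper instead gives a single case-free argument. Fix $x$, and let $N_S^+(x)$ be the set of $S$-out-neighbours of $x$ and $N_T^-(x)$ the set of $T$-in-neighbours of $x$. Then $(A_SA_T)_{xx}=|N_S^+(x)\cap N_T^-(x)|$, so looplessness makes these two sets disjoint subsets of $X\setminus\{x\}$. Their sizes are $k(S)$ and $k(T)$, by the row sums of $A_S$ and the column sums of $A_T$.

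Your valency-$1$ case is exactly this argument with $N_S^+(x)=\{\sigma(x)\}$, so the same reasoning would have covered all cases at once. The paper's route is more economical and more general. It uses only that $A_SA_T$ has zero diagonal, not that it is a $0$-$1$ matrix. Hence $k(S)+k(T)\le v-1$ holds for any loopless product, even one with entries larger than $1$. Your generic case, by contrast, needs $k(U)\le v-1$, which depends on $A_U$ being $0$-$1$.

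Your route does make the link with the symmetric corollary explicit, and it isolates where transpose-closure (equal row and column sums) is used. One minor point: bijectivity of $\sigma$ is not needed in the case $k(S)=1$. Row sums $1$ for $A_S$ together with column sums $k(T)$ for $A_T$ already suffice.
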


\begin{proof}
Since $S$ and $T$ are nonempty unions of non-identity basic relations, their
valencies are positive.  
Hence $1\le k(S),k(T)$.
Next, applying the identity $A_SA_T=A_U$ to the all-ones vector gives
\[
A_U\mathbf{1}
=
A_SA_T\mathbf{1}
=
A_S\bigl(k(T)\mathbf{1}\bigr)
=
k(T)k(S)\mathbf{1}.
\]
Therefore $k(U)=k(S)k(T)$.
Since $A_U$ is loopless, each row of $A_U$ has ones only off the diagonal,
so $k(U)\le v-1$.

It remains to prove the degree-sum bound.  Fix a vertex $x$.  Let
\[
N_S^+(x)=\{z\in X:(x,z)\in \bigcup_{i\in S}R_i\}
\textit{ and }
N_T^-(x)=\{z\in X:(z,x)\in \bigcup_{j\in T}R_j\}.
\]
Because $S$ and $T$ avoid the identity relation, both sets are contained in
$X\sm \{x\}$.  Moreover,
$|N_S^+(x)|=k(S)$, and  $|N_T^-(x)|=k(T)$,
using the row sum of $A_S$ and the column sum of $A_T$.
If $z\in N_S^+(x)\cap N_T^-(x)$, then there is a two-step walk $x \xrightarrow{S} z \xrightarrow{T} x$,
so $(A_SA_T)_{xx}>0$.  This contradicts the loopless condition, since
$A_SA_T=A_U$ and $A_U$ has zero diagonal.  Hence $N_S^+(x)\cap N_T^-(x)=\emptyset$.
Thus $N_S^+(x)$ and $N_T^-(x)$ are disjoint subsets of $X\sm \{x\}$,
and therefore
\[
k(S)+k(T)
=
|N_S^+(x)|+|N_T^-(x)|
\le v-1.
\]
This proves the claim.
\end{proof}

\begin{exa}
Matrix product factorizations do occur in non-commutative homogeneous configurations that are not thin.  The smallest one has order $10$, it is a Schurian configuration whose adjacency algebra corresponds to the double coset algebra of the group $C_5 \rtimes C_4$ of order $20$ with respect to a non-normal subgroup of order $2$.  The homogeneous coherent configuration has two elements of valency $1$, two symmetric elements of valency $2$ and two non-symmetric elements of valency $2$.  In this example, the product of the two symmetric elements of valency $2$ is their sum.  This example is perhaps not so interesting because this configuration admits a commutative $3$-class fusion which isolates these two elements, so this MPF 
is actually realized by two adjacency matrices of an association scheme. 

For an example of an MPF in a homogeneous coherent configuration that is not realized in an association scheme, we can consider the Schurian configuration corresponding to the action of $Sym(4)$ on the cosets of a subgroup $H$ of order $2$.  This gives a homogeneous configuration of order $12$, with two elements of valency $1$, and five elements of valency $2$, call these $A_2, \dots, A_6$.  The first three elements of valency $2$ correspond to double cosets represented by elements of valency $2$; these are symmetric.  The other two, $A_5$ and $A_6$, are represented by elements of order $3$ and $4$ only, and the inverse permutes this pair of double cosets, so they are not symmetric.  In the adjacency algebra of this configuration, the adjacency matrices of the symmetric elements do not commute.  We find 
$$ A_2 A_3 = A_4 + A_5, \quad A_3 A_2 = A_4 + A_6, $$ 
$$ A_2 A_4 = A_3 + A_6, \quad A_4 A_2 = A_3 + A_5, $$
and 
$$ A_3 A_4 = A_2 + A_5, \quad A_4 A_3 = A_2 + A_6. $$
So this gives an example of an MPF in a homogeneous coherent configuration that cannot be realized in an association scheme.     
\end{exa}

\section{Rank bounds for MPFs in symmetric association schemes}\label{sec:rankbounds}

\begin{thm}\label{thm:rank-mpf}
Let $\mathcal{R}$ be a symmetric $d$-class association scheme on $v$ vertices
with adjacency matrices $A_0=I,A_1,\dots,A_d$.  Let
$S,T,U\subseteq\{1,\dots,d\}$, and suppose
\[
A_SA_T=A_U
\]
is a loopless matrix-product factorization in the scheme.  Then:
\begin{enumerate}[(i)]
\item $\rank(A_U)\le \min\{\rank(A_S),\rank(A_T)\}$.

\item If $U\ne\emptyset$, then $\rank(A_U)\ge \frac{v}{k(U)}$.
Consequently, $\rank(A_S),\ \rank(A_T)\ge \frac{v}{k(U)}$.
Equivalently, if
\[
A_S=\sum_{h=0}^d \lambda_h(S)E_h,
\qquad
A_T=\sum_{h=0}^d \lambda_h(T)E_h,
\]
and $m_h=\tr(E_h)$, then
\[
\sum_{\{h:\lambda_h(S)\ne0\}}m_h\ge \frac{v}{k(U)},
\qquad
\sum_{\{h:\lambda_h(T)\ne0\}}m_h\ge \frac{v}{k(U)}.
\]
In particular, when $U=\{i\}$ is a single relation, this gives
\[
\rank(A_S),\ \rank(A_T)\ge \frac{v}{k_i},
\]
where $k_i$ is the valency of $A_i$.
\end{enumerate}
\end{thm}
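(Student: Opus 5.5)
Part (i) follows from the rank of a product. Since $A_U=A_SA_T$, we have $\rank(A_U)\le\min\{\rank(A_S),\rank(A_T)\}$. Alternatively, we can use the spectral description. By \Cref{prop:spectral-test}, $\lambda_h(U)=\lambda_h(S)\lambda_h(T)$ for every $h$, so $\lambda_h(U)\ne0$ forces both $\lambda_h(S)\ne0$ and $\lambda_h(T)\ne0$. Since $A_S=\sum_h\lambda_h(S)E_h$ with the $E_h$ orthogonal idempotents, we get $\rank(A_S)=\sum_{\lambda_h(S)\ne0}m_h$, where $m_h=\tr(E_h)=\rank(E_h)$, and similarly for $T$ and $U$. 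The support inclusion then gives (i) directly. It also gives the stated equivalent reformulation of (ii) in terms of the multiplicities, once the bound on $\rank(A_U)$ is established.

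For (ii), the key step is a general lower bound on the rank of a nonzero real symmetric matrix via trace inequalities. Let $r=\rank(A_U)$ and let $\theta_1,\dots,\theta_r$ be the nonzero eigenvalues of $A_U$, counted with multiplicity. Since $A_U$ is a symmetric $0$-$1$ matrix with constant row sum $k(U)$, we have
\[
\sum_{j=1}^r\theta_j^2=\tr(A_U^2)=\sum_{x,y}(A_U)_{xy}^2=v\,k(U).
\]
Every eigenvalue of a $k(U)$-regular nonnegative matrix satisfies $|\theta_j|\le k(U)$, for instance by the row-sum norm bound. Hence
\[
v\,k(U)=\sum_{j}\theta_j^2\le r\,k(U)^2,
\]
which gives $r\ge v/k(U)$ because $U\ne\emptyset$ ensures $k(U)\ge1$. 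Combining this with (i) yields $\rank(A_S),\rank(A_T)\ge v/k(U)$. The displayed sums over $\{h:\lambda_h(S)\ne0\}$ are exactly these ranks, by the idempotent decomposition above. The special case $U=\{i\}$ is obtained by substituting $k(U)=k_i$.

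The argument has no real obstacle; the only point requiring care is justifying $\tr(A_U^2)=v\,k(U)$. This uses that $A_U$ is a $0$-$1$ matrix, which is exactly the MPF hypothesis, and that it is symmetric, so that $\tr(A_U^2)=\tr(A_UA_U^{\top})$ counts its ones. Symmetry holds because the scheme is symmetric, and the constant row sum comes from \Cref{cor:valency-multiplicativity}. Looplessness is not needed for the bound itself.
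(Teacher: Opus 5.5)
Your proposal is correct and takes essentially the same route as the paper: the product rank inequality gives (i), and for (ii) the trace bound $v\,k(U)=\tr(A_U^2)\le \rank(A_U)\,k(U)^2$ is combined with (i) and with the idempotent formula $\rank(A_S)=\sum_{\lambda_h(S)\ne0}m_h$. Your alternative spectral argument for (i), via the support inclusion coming from $\lambda_h(U)=\lambda_h(S)\lambda_h(T)$, is also valid. One small correction: the constant row sum of $A_U$ holds simply because $A_U$ is a sum of basic relations, so you do not need the valency-multiplicativity corollary for it.
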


\begin{proof}
For part (i), we use the elementary rank inequality
\[
\rank(MN)\le \min\{\rank(M),\rank(N)\}
\]
for matrices $M,N$ of compatible sizes.  Applying this to
$M=A_S$ and $N=A_T$, and using $A_SA_T=A_U$, gives
\[
\rank(A_U)=\rank(A_SA_T)\le \min\{\rank(A_S),\rank(A_T)\}.
\]

For part (ii), let $W\subseteq\{1,\dots,d\}$ be nonempty.  Since the scheme is
symmetric, $A_W$ is the adjacency matrix of a loopless $k(W)$-regular graph
on $v$ vertices.  Hence $\tr(A_W^2)=vk(W)$.
Let the eigenvalues of $A_W$ be $\theta_1,\dots,\theta_v$, and let
$r=\rank(A_W)$.  Since $A_W$ is the adjacency matrix of a
$k(W)$-regular graph, every eigenvalue satisfies $|\theta_j|\le k(W)$.
Therefore
\[
vk(W)
=
\tr(A_W^2)
=
\sum_{j=1}^v \theta_j^2
=
\sum_{\theta_j\ne0}\theta_j^2
\le
r\,k(W)^2.
\]
Since $k(W)>0$, we obtain $\rank(A_W)=r\ge \frac{v}{k(W)}$.

Now apply this with $W=U$.  Since $U\ne\emptyset$, this gives
$\rank(A_U)\ge \frac{v}{k(U)}$.
Combining this with part (i), we obtain
\[
\rank(A_S),\ \rank(A_T)\ge \rank(A_U)\ge \frac{v}{k(U)}.
\]

Finally, in a symmetric association scheme the adjacency matrices are
simultaneously diagonalizable.  If
\[
A_S=\sum_{h=0}^d \lambda_h(S)E_h,
\]
then $A_S$ acts as multiplication by $\lambda_h(S)$ on the image of
$E_h$, whose dimension is $m_h=\tr(E_h)$.  Therefore
\[
\rank(A_S)=\sum_{\{h:\lambda_h(S)\ne0\}}m_h.
\]
The same argument applies to $A_T$.  This gives the stated spectral form.
When $U=\{i\}$, we have $k(U)=k_i$, giving the final special case.
\end{proof}

\begin{thm}\label{thm:DRG-extremal-rank}
Let $\mathcal{R}$ be a symmetric $d$-class association scheme on $v$ vertices
with adjacency matrices $A_0=I,A_1,\dots,A_d$.  Let
$\emptyset\ne U\subseteq\{1,\dots,d\}$, and put $k=k(U)$.  Suppose $\rank(A_U)=\frac{v}{k}$.
Then every non-zero eigenvalue of $A_U$ is equal to either $k$ or $-k$; equivalently,
$\spec(A_U)\subseteq\{-k,0,k\}$.
Moreover, both $k$ and $-k$ occur as eigenvalues, the graph $\Gamma_U$ with
adjacency matrix $A_U$ is bipartite, and
$A_U^3=k^2A_U$.
In particular, the eigenvalue $0$ occurs if and only if $\rank(A_U)<v$.
\end{thm}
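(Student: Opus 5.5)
The plan is to revisit the trace inequality from the proof of \Cref{thm:rank-mpf}(ii) and read off its equality case. Write $r=\rank(A_U)$ and let $\theta_1,\dots,\theta_v$ be the eigenvalues of the real symmetric matrix $A_U$. Since $\Gamma_U$ is a loopless $k$-regular graph, we have $\tr(A_U^2)=vk$ and $|\theta_j|\le k$ for all $j$. This gives
\[
vk=\sum_{\theta_j\ne0}\theta_j^2\le r k^2 .
\]
The hypothesis $r=v/k$ makes this an equality. Because each of the $r$ summands satisfies $\theta_j^2\le k^2$, every nonzero eigenvalue has $\theta_j^2=k^2$. Hence $\spec(A_U)\subseteq\{-k,0,k\}$.

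Next I establish the cubic identity. $A_U$ is symmetric, hence diagonalizable, so it is annihilated by $\prod_{\theta\in\spec(A_U)}(x-\theta)$, which divides $x(x-k)(x+k)=x^3-k^2x$. Therefore $A_U^3=k^2A_U$. Alternatively, I can write $A_U=\sum_h\lambda_h(U)E_h$ with $\lambda_h(U)\in\{0,\pm k\}$ and cube it directly. The eigenvalue $k$ occurs with eigenvector $\mathbf 1$.

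The step I expect to be the main obstacle is showing that $-k$ occurs and that $\Gamma_U$ is bipartite. I would use the classical fact that for a connected $k$-regular graph, $-k$ is an eigenvalue if and only if the graph is bipartite, and that for a general $k$-regular graph, the multiplicity of $-k$ equals the number of bipartite components. Connectivity is not given, so I would argue through the trace instead. The trace of $A_U$ is $0$ because $A_U$ is loopless. Since $\tr(A_U)=k(m_+-m_-)$, where $m_\pm$ are the multiplicities of $\pm k$, we get $m_+=m_-$. Since $m_+\ge 1$, the eigenvalue $-k$ occurs.

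For bipartiteness, the same trace device should work with odd powers. The identity $A_U^3=k^2A_U$ gives $A_U^{2m+1}=k^{2m}A_U$, so every odd power of $A_U$ has zero diagonal. The diagonal entry $(A_U^{2m+1})_{xx}$ counts closed walks of length $2m+1$ at $x$, so $\Gamma_U$ has no closed walks of odd length. Any odd cycle would give such a walk, so $\Gamma_U$ has no odd cycles and is bipartite. This avoids any connectivity issue entirely.

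Finally, the statement about $0$ is immediate. Since $A_U$ is diagonalizable, $0$ is an eigenvalue if and only if $A_U$ is singular, that is, if and only if $\rank(A_U)<v$.
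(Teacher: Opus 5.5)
Your proof is correct and follows essentially the same route as the paper: equality in the trace bound $vk\le rk^2$ forces every nonzero eigenvalue to be $\pm k$, $\tr(A_U)=0$ gives $m_+=m_-$ so $-k$ occurs, and the minimal polynomial dividing $x^3-k^2x$ gives $A_U^3=k^2A_U$. The one small difference is bipartiteness. The paper simply invokes the classical fact that a spectrum symmetric about $0$ forces bipartiteness. You instead derive it directly from $A_U^{2m+1}=k^{2m}A_U$ having zero diagonal, which is self-contained and does not even need the occurrence of $-k$.
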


\begin{proof}
Let $A=A_U$.  Since $A$ is the adjacency matrix of a loopless $k$-regular graph
on $v$ vertices, we have $\tr(A^2)=vk$.
Let the eigenvalues of $A$ be $\theta_1,\dots,\theta_v$, and let
$r=\rank(A)$ be the number of non-zero eigenvalues.  Since $A$ is the adjacency
matrix of a $k$-regular graph, every eigenvalue satisfies $|\theta_j|\le k$.
Therefore
\[
vk=\tr(A^2)=\sum_{j=1}^v \theta_j^2
          =\sum_{\theta_j\ne 0}\theta_j^2
          \le r k^2.
\]
By hypothesis, $r=\rank(A)=v/k$, so the inequality is in fact an equality:
\[
vk=\sum_{\theta_j\ne 0}\theta_j^2\le \frac{v}{k}k^2=vk.
\]
Hence equality holds term by term, and every non-zero eigenvalue satisfies $\theta_j^2=k^2$.
Thus every non-zero eigenvalue is either $k$ or $-k$, so $\spec(A_U)\subseteq\{-k,0,k\}$.
The eigenvalue $k$ occurs because $A\mathbf{1}=k\mathbf{1}$.  Since $A$ has zero
diagonal, $\tr(A)=0$.  If $m_+$ and $m_-$ denote the multiplicities of $k$ and
$-k$, respectively, then
$0=\tr(A)=m_+k-m_-k$,
so $m_+=m_-$.  
In particular, $-k$ also occurs.
The spectrum is therefore symmetric about $0$.  Hence $\Gamma_U$ is bipartite.
Finally, since every eigenvalue of $A$ lies in $\{-k,0,k\}$, the minimal polynomial of $A$ divides $x(x^2-k^2)$.
Consequently $A^3=k^2A$.
The eigenvalue $0$ occurs exactly when $A$ is not full rank, i.e. exactly when
$\rank(A_U)<v$.
\end{proof}

\subsection{Hamming schemes \texorpdfstring{$H(d,q)$}{H(d,q)}}\label{sec:hamm}

The Hamming scheme $H(d,q)$ has vertex set $X=\{0,1,\dots,q-1\}^d$, with relations
\[
R_i = \{(x,y)\in X\times X : d_H(x,y)=i\},\qquad 0\le i\le d,
\]
and distance–$i$ adjacency matrices $A_i$ defined by
\[
(A_i)_{xy}=1\iff d_H(x,y)=i, \, i=0,1,\dots,d.
\]

\begin{prop}\label{prop:Hamming-valency}
For $0\le i\le d$, the valency of $A_i$ in $H(d,q)$ is
\[
k_i=(q-1)^i\binom{d}{i}.
\]
\end{prop}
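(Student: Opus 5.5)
The plan is to count, for a fixed vertex, the words at Hamming distance exactly $i$ from it. Since the scheme $H(d,q)$ is vertex-transitive (indeed a translation scheme on $\mathbb{Z}_q^d$), the valency $k_i=p_{ii}^0$ equals the number of $y\in X$ with $d_H(x,y)=i$ for any fixed $x$, so we may take $x=(0,0,\dots,0)$. Then $(A_i)_{xy}=1$ exactly when $y$ has exactly $i$ non-zero coordinates. This gives $k_i=|\{y\in X: \mathrm{wt}(y)=i\}|$, where $\mathrm{wt}$ denotes the number of non-zero entries.

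To count these words, we describe each such $y$ by two choices: its support $\{j: y_j\neq 0\}$, which is an $i$-subset of the $d$ coordinate positions, and then its values on the support. There are $\binom{d}{i}$ supports, and each support coordinate can take any of the $q-1$ non-zero symbols independently. Coordinates outside the support are forced to be $0$. The correspondence between words and these pairs of choices is a bijection, so the count is $\binom{d}{i}(q-1)^i$.

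Finally, to address well-definedness directly rather than through translation invariance, note the following for an arbitrary $x$. The words $y$ with $d_H(x,y)=i$ are obtained by choosing $i$ positions at which $y$ differs from $x$, and at each such position choosing one of the $q-1$ symbols different from $x_j$. This yields the same count independently of $x$. As a sanity check, $k_0=1$, and $\sum_i k_i=\sum_i\binom{d}{i}(q-1)^i=q^d=v$ by the binomial theorem.

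There is no real obstacle here. The only point needing care is that the row sum of $A_i$ is constant, and the direct count for arbitrary $x$ in the previous step settles this.
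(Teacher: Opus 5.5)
Your proposal is correct and matches the paper's proof: fix a vertex $x$, choose the $\binom{d}{i}$ positions where $y$ differs from $x$, then one of the $q-1$ other symbols in each, giving $k_i=(q-1)^i\binom{d}{i}$. Your extra reduction to $x=0$ via translation invariance and the binomial-theorem check $\sum_i k_i=q^d$ are harmless additions.
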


\begin{proof}
Fix $x\in X$. To obtain $y$ at Hamming distance $i$ from $x$, choose the $i$ coordinates where $x$ and $y$ differ (there are $\binom{d}{i}$ choices) and, in each such coordinate, choose one of the $q-1$ values different from $x$'s value. Hence $k_i=(q-1)^i\binom{d}{i}$.
\end{proof}

\begin{prop}\label{prop:Hamming-rank}
Let $H(d,q)$ be the Hamming scheme on $v=q^d$ vertices. Suppose
\[
A_SA_T=A_u
\]
is a matrix–product factorization with $S,T\subseteq\{1,\dots,d\}$ and $U=\{u\}$ a single distance relation. Then
\[
\rank (A_S),\ \rank (A_T)\ \ge\ \frac{v}{k_u}
=\frac{q^d}{(q-1)^u\binom{d}{u}}.
\]
\end{prop}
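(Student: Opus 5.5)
This is a direct specialization of \Cref{thm:rank-mpf}(ii) combined with the valency formula of \Cref{prop:Hamming-valency}; the plan is simply to check that the hypotheses of that theorem are met and substitute.

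First, I will note that $H(d,q)$ is a symmetric $d$-class association scheme on $v=q^d$ vertices. Indeed, Hamming distance is symmetric, so each $A_i$ is symmetric. Its intersection numbers are well defined because the automorphism group (coordinate permutations together with symbol permutations in each coordinate) acts transitively on each relation $R_i$. Hence the given identity $A_SA_T=A_u$ is a loopless MPF in the sense of \Cref{thm:rank-mpf}, with $U=\{u\}$. I will read the statement as having $u\ge 1$ (the loopless setting), so $U\neq\emptyset$.

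Next, I will apply \Cref{thm:rank-mpf}(ii) in its special case $U=\{i\}$ with $i=u$. This yields
\[
\rank(A_S),\ \rank(A_T)\ \ge\ \rank(A_u)\ \ge\ \frac{v}{k_u}.
\]
The underlying reason is that $\tr(A_u^2)=vk_u$, while every nonzero eigenvalue of $A_u$ has square at most $k_u^2$; this forces $\rank(A_u)\ge v/k_u$. Part (i) of the same theorem then transfers the bound to the two factors.

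Finally, I will substitute $v=q^d$ and $k_u=(q-1)^u\binom{d}{u}$ from \Cref{prop:Hamming-valency} to obtain the displayed expression. There is no genuine obstacle here. The only points to state explicitly are that the Hamming scheme satisfies the symmetric-scheme hypothesis, and that $u\neq 0$, so the output relation is a nonempty loopless graph and the trace identity $\tr(A_u^2)=vk_u$ applies.
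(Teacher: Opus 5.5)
Your proposal is correct and follows the paper's proof exactly: apply \Cref{thm:rank-mpf}(ii) with $U=\{u\}$, then substitute $v=q^d$ and $k_u=(q-1)^u\binom{d}{u}$ from \Cref{prop:Hamming-valency}. Your additional checks, that $H(d,q)$ is a symmetric scheme and that $u\ge 1$ by the definition of an MPF, are left implicit in the paper.
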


\begin{proof}
This is the specialization of~\Cref{thm:rank-mpf}(ii) to $v=q^d$ and $k_u$ as in \Cref{prop:Hamming-valency}.
\end{proof}

We now sharpen the structure of MPFs of the form $A_1A_S$ in the binary case.

\begin{thm}\label{thm:Hn2-left-factor}
Let $H(d,2)$ be the binary Hamming scheme with distance matrices
$A_0,\dots,A_d$, where $d\ge1$.  Let
$T\subseteq\{1,\dots,d\}$, and suppose $A_1A_T=A_U$
is a loopless matrix-product factorization with
$U\subseteq\{1,\dots,d\}$.  Then the following hold:
\begin{enumerate}[(i)]
\item If $d=1$, then $T=\emptyset$ and $U=\emptyset$.
\item If $d\ge2$, then either $T=\emptyset$ and $U=\emptyset$, or $T=\{d\}, U=\{d-1\}$,
in which case $A_1A_d=A_{d-1}$.
\end{enumerate}
Thus, for $d\ge2$, the only nonzero loopless MPF of this form is
$A_1A_d=A_{d-1}$.  This MPF is trivial, since $A_d$ has valency $1$.
\end{thm}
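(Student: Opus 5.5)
Since $H(d,2)$ is the distance-$i$ scheme of the $d$-cube, I will work directly with the three-term recurrence
\[
A_1A_j=(d-j+1)A_{j-1}+jA_{j+1}\qquad(0\le j\le d),
\]
with the conventions $A_{-1}=A_{d+1}=0$. This holds because $b_{j-1}=d-j+1$, $c_{j+1}=j+1$, $a_j=0$. Substituting into the coefficient vector \eqref{eq:coefficients} for $S=\{1\}$ gives, for $0\le k\le d$,
\[
c_k=[k+1\in T]\,(d-k)+[k-1\in T]\,k .
\]
Here I reindex: the $A_{k}$-coefficient receives $(d-(k+1)+1)=d-k$ from $j=k+1$ and $(k-1)+1=k$ from $j=k-1$. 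By \Cref{prop:structure-test}, an MPF requires $c_0=0$ and $c_k\in\{0,1\}$ for $1\le k\le d$. Since $a_j=0$, $c_k$ only involves $T$-indices of parity opposite to $k$. The analysis therefore reduces to checking, index by index, which $j\in T$ are permissible.

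\textbf{Key step: eliminate each possible $j\in T$.} Fix $j\in T$ with $1\le j\le d$.
\begin{itemize}
\item[] First look at $k=j-1$. Its coefficient is at least $d-j+1$, with an extra contribution from $j-2\in T$.
\item[] If $j=1$, this is $k=0$ and $c_0\ge d\ge1$, which violates looplessness. So $1\notin T$.
\item[] If $2\le j\le d-1$, then $c_{j-1}\ge d-j+1\ge 2$, which is a contradiction.
\item[] Hence $T\subseteq\{d\}$.
\end{itemize}
If $T=\{d\}$, then $c_{d-1}=1$ and every other $c_k=0$; the only other candidate, $k=d+1$, lies outside the range. This gives $A_1A_d=A_{d-1}$. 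It is loopless precisely when $d-1\ge1$, i.e.\ $d\ge2$. For $d=1$, $T=\{1\}$ produces $c_0=1$, a loop, so only $T=\emptyset$ survives, which gives (i). For $d\ge2$, the options are $T=\emptyset$ (so $U=\emptyset$) or $T=\{d\}$, $U=\{d-1\}$. Finally, $k_d=\binom dd=1$ by \Cref{prop:Hamming-valency}, so this MPF is trivial.

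\textbf{Main obstacle.} The only delicate point is bookkeeping in the recurrence: getting $b_{j-1}=d-j+1$ and $c_{j+1}=j+1$ right at the boundary indices $j=1$ and $j=d$, and confirming there is no cancellation between contributions. Cancellation cannot occur, since all coefficients are nonnegative, so a single lower bound $c_{j-1}\ge d-j+1$ suffices. As a sanity check I would also verify the result via \Cref{prop:spectral-test} using the Krawtchouk eigenvalues. In particular, $A_1A_d=A_{d-1}$ holds because $A_d$ is the antipodal permutation and $P_{h,d}=(-1)^h$.
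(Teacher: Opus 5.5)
Your proof is correct and is essentially the paper's: both expand $A_1A_T$ using the three-term recurrence for the $d$-cube and use nonnegativity of the coefficients to exclude every $j<d$ from $T$. The only cosmetic difference is that you exclude $j$ via the downward coefficient $b_{j-1}=d-j+1\ge 2$ (using looplessness when $j=1$), while the paper uses the upward coefficient $c_{j+1}=j+1\ge 2$ uniformly for $1\le j<d$.

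One slip: your displayed recurrence should read $A_1A_j=(d-j+1)A_{j-1}+(j+1)A_{j+1}$; as written it would give $A_1A_1=dA_0+A_2$. Your reindexed formula for $c_k$ already uses the correct value $c_{j+1}=j+1$, so nothing downstream is affected.
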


\begin{proof}
The $d$-cube $Q_d$ underlying $H(d,2)$ is distance-regular with
intersection array
\[
[d,d-1,\dots,1;\,1,2,\dots,d].
\]
Hence, for $1\le i\le d$,
\[
A_1A_i
=
b_{i-1}A_{i-1}+a_iA_i+c_{i+1}A_{i+1}
=
(d-i+1)A_{i-1}+(i+1)A_{i+1},
\]
where $A_{d+1}=0$, and $a_i=0$ because the hypercube is bipartite.

Let $T\subseteq\{1,\dots,d\}$.  If $i\in T$ with $i<d$, then the
coefficient of $A_{i+1}$ in $A_1A_i$ is $i+1\ge2$.
Since all coefficients in the expansion of $A_1A_T$ are nonnegative, this
would force a coefficient larger than $1$, so $A_1A_T$ could not be a
$0$-$1$ adjacency matrix.  Therefore no element $i<d$ can belong to
$T$.

Thus either $T=\emptyset$ or $T=\{d\}$.  If $T=\emptyset$, then $A_1A_T=0=A_{\emptyset}$,
so $U=\emptyset$.
Now suppose $T=\{d\}$.  Then
$A_1A_d=A_{d-1}$.
If $d=1$, this reads
$A_1A_1=A_0$,
which is not loopless.  Hence $T=\{d\}$ is impossible when $d=1$.
If $d\ge2$, then $d-1\ge1$, so $A_{d-1}$ is loopless and the product is
a valid MPF.
Finally, the valency of $A_d$ in $H(d,2)$ is
\[
k_d=(2-1)^d\binom{d}{d}=1.
\]
Therefore the nonzero example $A_1A_d=A_{d-1}$ is trivial according to our
definition.
\end{proof}

\begin{rem}
In the binary Hamming scheme $H(d,2)$, the relation $A_d$ has valency
\[
k_d=(2-1)^d\binom{d}{d}=1.
\]
Therefore the factorization $A_1A_d=A_{d-1}$
is trivial according to our definition, since one of the factors has valency $1$.
Thus this result should not be described as a nontrivial MPF. More precisely, for
$d\ge 2$, the only non-zero loopless MPF of the form $A_1A_T=A_U$ is $T=\{d\}$, and $U=\{d-1\}$,
and this is a trivial MPF.
\end{rem}


We next record the corresponding non-binary obstruction.  Here the empty
choice $T=\emptyset$ always gives the zero identity $A_1A_{\emptyset}=0=A_{\emptyset}$,
so the correct statement is that there is no non-zero loopless MPF of this form.

\begin{prop}\label{prop:nonbinary-Hamming-left-factor}
Let $H(d,q)$ be a Hamming scheme with $q>2$. If
$T\subseteq\{1,\dots,d\}$ and $A_1A_T=A_U$
is a loopless matrix-product factorization, then $T=\emptyset$ and
$U=\emptyset$.  Equivalently, there is no non-zero loopless MPF of the form
$A_1A_T=A_U$ in $H(d,q)$ when $q>2$.
\end{prop}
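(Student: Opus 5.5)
We follow the same approach as in \Cref{thm:Hn2-left-factor}, using the distance-regular recurrence for $H(d,q)$. The Hamming graph $H(d,q)$ is distance-regular with
\[
b_i=(d-i)(q-1),\qquad c_i=i,\qquad a_i=i(q-2),
\]
so for $1\le i\le d$ (with $A_{d+1}=0$)
\[
A_1A_i=(d-i+1)(q-1)A_{i-1}+i(q-2)A_i+(i+1)A_{i+1}.
\]
By \Cref{prop:structure-test}, $A_1A_T=A_U$ holds only if every coefficient $c_k=\sum_{j\in T}p_{1j}^k$ lies in $\{0,1\}$ and $c_0=0$. All intersection numbers are nonnegative. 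Hence it suffices to find, for each $i\in T$, a single coefficient of $A_1A_i$ that is at least $2$, or a contribution to $c_0$.

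The key observation is the coefficient of $A_{i-1}$, namely $b_{i-1}=(d-i+1)(q-1)$. For $1\le i\le d$ we have $d-i+1\ge1$, and since $q>2$ we get $q-1\ge2$, so $b_{i-1}\ge2$. When $i\ge2$, the index $i-1\ge1$, so this gives $c_{i-1}\ge 2$, which is not allowed. When $i=1$, the coefficient lands on $A_0$: $c_0\ge p_{11}^0=k_1=d(q-1)>0$, which violates looplessness. (Alternatively, $a_1=q-2\ge1$ together with $c_2=2$ also works when $d\ge2$, but the $b_{i-1}$ argument handles everything uniformly.)

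Therefore no $i\in\{1,\dots,d\}$ can lie in $T$, so $T=\emptyset$. Then $A_1A_T=0$, which forces $U=\emptyset$ because the $A_k$ are linearly independent. There is no real obstacle here. The only step needing care is to justify the intersection numbers of $H(d,q)$. I would do this directly: for $y$ at distance $i$ from $x$, the neighbours of $y$ at distance $i-1$ from $x$ are obtained by changing one of the $i$ differing coordinates back to $x$'s value, so $c_i=i$. The neighbours at distance $i+1$ are obtained by changing one of the $d-i$ agreeing coordinates to one of $q-1$ other values, so $b_i=(d-i)(q-1)$. The remaining neighbours give $a_i=i(q-2)$. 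The case $d=1$ is $K_q$, and it is covered by the same computation.
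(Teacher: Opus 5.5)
Your proposal is correct and takes essentially the paper's approach: expand $A_1A_i$ with the distance-regular recurrence for $H(d,q)$ and exhibit, for each $i\in T$, a coefficient of at least $2$ or a loop. The only difference is that you use $b_{i-1}=(d-i+1)(q-1)\ge 2$ for every $i$, which makes the argument uniform. The paper instead uses $c_{i+1}=i+1\ge 2$ for $i<d$, which does not need $q>2$, and invokes $q>2$ only at $i=d$, through $b_{d-1}=q-1\ge 2$ and $a_d=d(q-2)\ge 1$.
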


\begin{proof}
The Hamming graph $H(d,q)$ is distance-regular with intersection numbers $b_i=(d-i)(q-1)$, $c_i=i$, $a_i=i(q-2)$.
Hence, for $1\le i\le d$,
$A_1A_i
=
b_{i-1}A_{i-1}+a_iA_i+c_{i+1}A_{i+1}$,
where $A_{d+1}=0$.  If $1\le i<d$, then the coefficient of
$A_{i+1}$ is $c_{i+1}=i+1\ge 2$.
Therefore no such $i$ can belong to $T$, since otherwise $A_1A_T$ would
have a coefficient larger than $1$.
It remains only to consider $i=d$.  In this case $A_1A_d=b_{d-1}A_{d-1}+a_dA_d
=(q-1)A_{d-1}+d(q-2)A_d$.
Since $q>2$, both $q-1\ge2$ and $d(q-2)\ge1$, so this product is not a
$0$-$1$ adjacency matrix of a loopless graph.  Hence $d\notin T$ as well.
Thus $T=\emptyset$, and consequently $A_1A_T=0=A_{\emptyset}$, so
$U=\emptyset$.
\end{proof}




\begin{exa}\label{exa:H32}
In $H(3,2)$ we have distance matrices $A_0,A_1,A_2,A_3$. Using the intersection array
\[
[3,2,1;\,1,2,3],
\]
we obtain $A_1A_1 = 3A_0+2A_2$,
$A_1A_2 = 2A_1+3A_3$,
$A_1A_3 = A_2$.
None of $A_1A_1$ or $A_1A_2$ is an MPF (coefficients $>1$), while $A_1A_3=A_2$ is a special case of~\Cref{thm:Hn2-left-factor} with $d=3$.
\end{exa}

\section{Translation and cyclotomic schemes}\label{sec:cyclotomic}

In this section we explain the connection between MPFs in translation
association schemes and near-factorizations in finite abelian groups.  This
also clarifies the overlap with Cayley graph MPFs studied in earlier work and
with recent cyclotomic constructions of $\lambda$-fold near-factorizations.

Let $G$ be a finite abelian group, written multiplicatively, with identity
element $e$.  For a subset $S\subseteq G$, write
\[
   \underline{S}:=\sum_{s\in S}s\in \mathbb ZG
\]
for the corresponding group-ring element.  If $S\subseteq G\sm\{e\}$,
let $A_S$ denote the adjacency matrix of the Cayley graph
$\operatorname{Cay}(G,S)$, so that
\[
   (A_S)_{x,y}=1
   \quad\Longleftrightarrow\quad
   x^{-1}y\in S .
\]

Let $K\leq \operatorname{Aut}(G)$ contain the inversion automorphism
$g\mapsto g^{-1}$.  The orbits of $K$ on $G\sm\{e\}$ define a symmetric
fusion of the thin translation scheme of $G$.  Thus whenever $S$ is a union of
$K$-orbits, $A_S$ is a graph in this symmetric association scheme.

\begin{prop}\label{prop:translation-group-ring}
Let $S,T\subseteq G\sm\{e\}$.  For every $x,y\in G$, the entry
$(A_SA_T)_{x,y}$ is the coefficient of $x^{-1}y$ in the group-ring product
$\underline{S}\,\underline{T}$.  Consequently, if
$U\subseteq G\sm\{e\}$, then
\[
   A_SA_T=A_U
   \qquad\Longleftrightarrow\qquad
   \underline{S}\,\underline{T}=\underline{U}
   \quad\text{in } \mathbb ZG .
\]
In particular, if $S,T,U$ are unions of $K$-orbits, then a group-ring identity
$\underline{S}\,\underline{T}=\underline{U}$ is precisely an MPF in the
corresponding symmetric translation association scheme.
\end{prop}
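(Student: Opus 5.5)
The plan is to compute the entries of $A_SA_T$ directly from the Cayley graph definition and then compare them with the coefficients of the group-ring product. Fix $x,y\in G$. By definition,
\[
(A_SA_T)_{x,y}=\sum_{z\in G}(A_S)_{x,z}(A_T)_{z,y}
=\#\{z\in G:\ x^{-1}z\in S,\ z^{-1}y\in T\}.
\]
We then reparametrize by $s=x^{-1}z$. This gives a bijection between such $z$ and pairs $(s,t)\in S\times T$ with $st=x^{-1}y$: set $t=z^{-1}y=s^{-1}x^{-1}y$, and conversely recover $z=xs$. On the other side, $\underline{S}\,\underline{T}=\sum_{s\in S,t\in T}st$, so the coefficient of $g$ in this product is $\#\{(s,t)\in S\times T: st=g\}$. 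Taking $g=x^{-1}y$ gives the first claim. Commutativity of $G$ is not even needed here, as long as the order $st$ is kept.

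For the equivalence, note that $(A_U)_{x,y}=1$ if and only if $x^{-1}y\in U$, and otherwise the entry is $0$. So $(A_U)_{x,y}$ is exactly the coefficient of $x^{-1}y$ in $\underline{U}$. As $(x,y)$ ranges over $G\times G$, the element $x^{-1}y$ takes every value $g\in G$; for instance, take $x=e$ and $y=g$. Hence entrywise equality $A_SA_T=A_U$ is the same as equality of all coefficients of $\underline{S}\,\underline{T}$ and $\underline{U}$, which is equality in $\mathbb ZG$.

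For the final sentence, suppose $S,T,U$ are unions of $K$-orbits on $G\setminus\{e\}$. Then $A_S,A_T,A_U$ are sums of basic adjacency matrices of the symmetric fusion scheme. These are loopless because $e\notin U$. So the identity $A_SA_T=A_U$ is by definition an MPF in that scheme, and by the equivalence just proved it corresponds to the group-ring identity.

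No step is a real obstacle. The only point needing care is the bijection $z\leftrightarrow(s,t)$ and its orientation convention, $x^{-1}y$ versus $yx^{-1}$, which has to match the definition of $A_S$.
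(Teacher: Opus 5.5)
Your proof is correct and follows the paper's argument essentially verbatim: count the intermediate vertices $z$ with $x^{-1}z\in S$ and $z^{-1}y\in T$, then reparametrize via $s=x^{-1}z$, $t=z^{-1}y$ to get exactly the representations $st=x^{-1}y$, which is the coefficient of $x^{-1}y$ in $\underline{S}\,\underline{T}$. The extra details you supply (the inverse map $z=xs$, and the surjectivity of $(x,y)\mapsto x^{-1}y$ needed for the equivalence) are the points the paper leaves as ``follows immediately.''
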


\begin{proof}
The number $(A_SA_T)_{x,y}$ counts the vertices $z\in G$ such that
$x^{-1}z\in S$ and $z^{-1}y\in T$.  Writing
$s=x^{-1}z$ and $t=z^{-1}y$, this is equivalent to $st=x^{-1}y$.
Thus $(A_SA_T)_{x,y}$ is exactly the number of representations of $x^{-1}y$
as a product $st$ with $s\in S$ and $t\in T$, which is the coefficient of
$x^{-1}y$ in $\underline{S}\,\underline{T}$.  The stated equivalence follows immediately.
\end{proof}

\begin{cor}\label{cor:cayley-overlap}
Every abelian Cayley graph MPF with inverse-closed connection sets is realized
inside a symmetric association scheme.  More precisely, if
$S,T,U\subseteq G\sm\{e\}$ are inverse-closed and
$A_SA_T=A_U$
as Cayley graph adjacency matrices, then the same identity is an MPF in the
symmetric orbit fusion of the thin translation scheme obtained from the orbits
of $\langle g\mapsto g^{-1}\rangle$.
\end{cor}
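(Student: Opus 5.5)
This is a direct consequence of \Cref{prop:translation-group-ring}, so the proof consists of checking that its hypotheses hold. Take $K=\langle \iota\rangle\le\operatorname{Aut}(G)$, where $\iota:g\mapsto g^{-1}$ is an automorphism because $G$ is abelian. Since $K$ contains inversion, the preceding discussion shows that the orbits of $K$ on $G\sm\{e\}$ form a symmetric fusion of the thin translation scheme of $G$. These orbits are the sets $\{g,g^{-1}\}$, which are singletons exactly when $g$ is an involution. The basic relations are the $\{(x,y): x^{-1}y\in O\}$ for each orbit $O$, and the adjacency matrix of the relation for $O$ is $A_O$.

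Next, a subset $W\subseteq G\sm\{e\}$ is a union of $K$-orbits if and only if $W$ is inverse-closed. If $W$ is inverse-closed, then $W=\bigcup_{w\in W}\{w,w^{-1}\}$. Conversely, every union of orbits $\{g,g^{-1}\}$ is closed under inversion. Consequently, for inverse-closed $S,T,U$, the Cayley adjacency matrices satisfy $A_S=\sum_{O\subseteq S}A_O$, and similarly for $T$ and $U$. So $A_S,A_T,A_U$ are unions of non-identity basic relations in the scheme, and they are loopless because $e\notin S\cup T\cup U$.

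Now assume $A_SA_T=A_U$ as Cayley graph adjacency matrices. The two sides are literally the same matrices as the corresponding scheme elements $A_S,A_T,A_U$. Hence the identity is a loopless MPF in the sense of the definition, since $U\subseteq\{1,\dots,d\}$ indexes non-identity orbits. Equivalently, \Cref{prop:translation-group-ring} turns the identity into $\underline{S}\,\underline{T}=\underline{U}$ in $\mathbb ZG$ with $S,T,U$ unions of $K$-orbits, which is exactly the MPF criterion stated there.

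No step is a genuine obstacle. The only point needing care is verifying that the orbit fusion is a symmetric association scheme, which is covered by the standing claim before \Cref{prop:translation-group-ring}: Schurian fusions by a subgroup of $\operatorname{Aut}(G)$ are schemes, and containing $\iota$ makes each relation symmetric.
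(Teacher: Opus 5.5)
Your proposal is correct and follows the paper's own argument: inverse-closed subsets of $G$ not containing $e$ are exactly the unions of orbits of $\langle g\mapsto g^{-1}\rangle$, and the conclusion then follows from \Cref{prop:translation-group-ring}. Your additional check that the Cayley adjacency matrices coincide with the corresponding sums of basic relation matrices (so that $A_SA_T=A_U$ is directly a loopless MPF in the fusion scheme) only spells out what the paper leaves implicit.
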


\begin{proof}
If $S,T,U$ are inverse-closed, then they are unions of the orbits of the
inversion automorphism.  The result follows from
\Cref{prop:translation-group-ring}.
\end{proof}

\begin{rem}\label{rem:nf-vs-mpf}
A near-factorization of $G$ is the special case
$\underline{S}\,\underline{T}=\underline{G\sm\{e\}}$.
Equivalently, in the associated translation scheme, $A_SA_T=J-I$.
More generally, a $\lambda$-fold near-factorization satisfies
$\underline{S}\,\underline{T}
   =
   \lambda\,\underline{G\sm\{e\}}$,
and hence gives $A_SA_T=\lambda(J-I)$.
Thus a $\lambda$-fold near-factorization is an MPF in the present sense exactly
when $\lambda=1$.  For $\lambda>1$ it gives a natural weighted analogue, but
not a $0$-$1$ matrix product factorization.
\end{rem}

The next construction gives an infinite family of cyclotomic MPFs which are
not near-factorizations.

\begin{thm}\label{thm:projective-line-mpf}
Let $V=\mathbb F_q^2$ be regarded as an additive group, and let
$\mathfrak X_q$ be the translation association scheme whose nontrivial classes
are the punctured $1$-dimensional subspaces $L^\#:=L\sm\{0\}$, where $L\in \operatorname{PG}(1,q)$.
Equivalently, $\mathfrak X_q$ is the orbit scheme for the scalar action of
$\mathbb F_q^\times$ on $V\sm\{0\}$.
If $L,M$ are distinct $1$-dimensional subspaces of $V$, then
\[
   A_{L^\#}A_{M^\#}
   =
   A_{V\sm(L\cup M)}
   =
   \sum_{\substack{N\in\operatorname{PG}(1,q)\\ N\ne L,M}} A_{N^\#}.
\]
Consequently, for $q>2$, this gives a nontrivial MPF in a symmetric cyclotomic
association scheme.  It is not a near-factorization, since the output is a
proper subset of $V\sm\{0\}$.
\end{thm}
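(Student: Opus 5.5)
By \Cref{prop:translation-group-ring}, it suffices to prove the group-ring identity $\underline{L^\#}\,\underline{M^\#}=\underline{V\sm(L\cup M)}$ in $\mathbb Z V$, written additively. The plan is a direct counting argument based on the direct-sum decomposition $V=L\oplus M$, followed by a check that the scheme is well defined and symmetric, and that the factorization is nontrivial.

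\textbf{Counting.} Since $L\ne M$ are distinct $1$-dimensional subspaces of the $2$-dimensional space $V$, we have $L\cap M=\{0\}$ and $L+M=V$. Hence every $w\in V$ has a unique expression $w=a+b$ with $a\in L$ and $b\in M$. The coefficient of $w$ in $\underline{L^\#}\,\underline{M^\#}$ is the number of pairs $(a,b)\in L^\#\times M^\#$ with $a+b=w$. By uniqueness, this number is $1$ if both components of $w$ are non-zero and $0$ otherwise. Both components are non-zero exactly when $w\notin L\cup M$: if $a=0$ then $w\in M$, if $b=0$ then $w\in L$, and conversely $w\in L$ forces $b=0$, while $w\in M$ forces $a=0$. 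This gives the first equality, and in particular the coefficient of $0$ is $0$, so the product is loopless.

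\textbf{Partition.} The sets $N^\#$ for $N\in\operatorname{PG}(1,q)$ partition $V\sm\{0\}$, so
\[
V\sm(L\cup M)=\bigsqcup_{N\ne L,M}N^\#,
\]
which is the second equality.

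\textbf{Scheme and nontriviality.} The classes $N^\#$ are the orbits of $\mathbb F_q^\times\le\operatorname{Aut}(V)$. This group contains $-1$, which is the inversion automorphism in additive notation, so the orbit fusion is a symmetric cyclotomic translation scheme as described before \Cref{prop:translation-group-ring}, and $L^\#,M^\#$ are basic relations. Each factor has valency $q-1$, which exceeds $1$ when $q>2$, so the MPF is nontrivial. As a consistency check, the output has valency $(q-1)^2=(q+1-2)(q-1)$, in agreement with \Cref{cor:valency-multiplicativity}. Finally, $L\cup M\ne\{0\}$, so the output misses non-zero elements and is a proper subset of $V\sm\{0\}$; hence this is not a near-factorization in the sense of \Cref{rem:nf-vs-mpf}.

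There is no real obstacle in this argument. The only step needing care is the uniqueness of the decomposition $w=a+b$, which uses that $L$ and $M$ are distinct lines.
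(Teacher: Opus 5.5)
Your proposal is correct and follows essentially the same route as the paper. Both arguments use the unique decomposition $w=a+b$ from $V=L\oplus M$ to get $\underline{L^\#}\,\underline{M^\#}=\underline{V\sm(L\cup M)}$, then transfer this via \Cref{prop:translation-group-ring}. Your explicit checks of the partition into the $N^\#$, symmetry of the scheme, nontriviality for $q>2$, and the non-near-factorization claim fill in details that the paper leaves implicit.
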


\begin{proof}
Since $L$ and $M$ are distinct $1$-dimensional subspaces of the $2$-dimensional
space $V$, we have $V=L\oplus M$.
Thus every $v\in V$ has a unique expression $v=\ell+m$, $\ell\in L,\quad m\in M$.
This expression has $\ell\ne0$ and $m\ne0$ if and only if
$v\notin L\cup M$.  Therefore every element of
$V\sm(L\cup M)$ occurs exactly once in $\underline{L^\#}\,\underline{M^\#}$,
and every element of $L\cup M$ occurs zero times.  Hence $\underline{L^\#}\,\underline{M^\#}
   =
   \underline{V\sm(L\cup M)}$.
The matrix identity follows from
\Cref{prop:translation-group-ring}.  The factor valencies are
$q-1$, while the output valency is
\[
   |V\sm(L\cup M)|
   =
   q^2-|L\cup M|
   =
   q^2-(2q-1)
   =
   (q-1)^2,
\]
as required by valency multiplicativity.
\end{proof}

\begin{rem}
Identifying $V=\mathbb F_q^2$ with $\mathbb F_{q^2}$, the punctured
$1$-dimensional $\mathbb F_q$-subspaces are precisely the cosets of
$\mathbb F_q^\times$ in $\mathbb F_{q^2}^\times$.  Thus~\Cref{thm:projective-line-mpf} is a cyclotomic construction of order
$q+1$.  For $q=5$, this recovers an explicit MPF in a $6$-class cyclotomic
scheme on $25$ vertices, with two factors of valency $4$ and output a union of
four basic relations.
\end{rem}

We now record a cyclotomic-number criterion for MPFs in finite-field
translation schemes.  This gives a direct way to compare the present MPF
condition with cyclotomic constructions of near-factorizations.
Let $q$ be an odd prime power, let $\gamma$ be a primitive element of
$\mathbb F_q$, and let $N\mid(q-1)$.  Put $C_i\coloneqq \gamma^i\langle \gamma^N\rangle$, where $i\in\mathbb Z/N\mathbb Z$.
Assume that $-1\in C_0$, equivalently $(q-1)/N$ is even, so that the
corresponding cyclotomic translation scheme is symmetric.  Let $B_i$ be the
adjacency matrix of the relation
\[
   (x,y)\in R_i
   \quad\Longleftrightarrow\quad
   y-x\in C_i .
\]
For $\mathcal I\subseteq\mathbb Z/N\mathbb Z$, we define the following 
\[
   C_{\mathcal I}:=\bigcup_{i\in\mathcal I}C_i,
   \qquad
   B_{\mathcal I}:=\sum_{i\in\mathcal I}B_i .
\]

\begin{nota}
Let $(a,b)_N\coloneqq |(C_a+1)\cap C_b|$
denote the cyclotomic numbers of order $N$, with indices taken modulo $N$.
For $\mathcal I,\mathcal J\subseteq\mathbb Z/N\mathbb Z$ and
$h\in\mathbb Z/N\mathbb Z$, set
\[
   m_h(\mathcal I,\mathcal J)
   :=
   \sum_{i\in\mathcal I}\sum_{j\in\mathcal J}
   (i-h,j-h)_N .
\]  
\end{nota}

\begin{thm}\label{thm:cyclotomic-criterion}
There is a loopless MPF
$B_{\mathcal I}B_{\mathcal J}=B_{\mathcal L}$
if and only if
$\mathcal I\cap\mathcal J=\varnothing$
and
$m_h(\mathcal I,\mathcal J)\in\{0,1\}$ for every $h\in\mathbb Z/N\mathbb Z$.
In that case $\mathcal L
   =
   \{h\in\mathbb Z/N\mathbb Z:
      m_h(\mathcal I,\mathcal J)=1\}$.
Moreover, $B_{\mathcal I}B_{\mathcal J}
   =
   \lambda(J-I)$
if and only if
$\mathcal I\cap\mathcal J=\varnothing$ and $m_h(\mathcal I,\mathcal J)=\lambda$
for every $h$.
Thus the case $\lambda=1$ is exactly the near-factorization case, while
$\lambda>1$ is a weighted analogue rather than an MPF in the present sense.
\end{thm}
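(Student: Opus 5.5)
The plan is to reduce everything to \Cref{prop:structure-test} and \Cref{prop:translation-group-ring}, by computing the coefficient of each basic class in the group-ring product $\underline{C_{\mathcal I}}\,\underline{C_{\mathcal J}}$ in terms of cyclotomic numbers. The basic relations of the scheme are $B_0'=I$ (difference $0$) together with $B_h$ for $h\in\mathbb Z/N\mathbb Z$. By \Cref{prop:translation-group-ring}, the $(x,y)$ entry of $B_{\mathcal I}B_{\mathcal J}$ is the number of pairs $(a,b)\in C_{\mathcal I}\times C_{\mathcal J}$ with $a+b=y-x$. So the proof reduces to computing this count when $y-x=0$ and when $y-x\in C_h$.

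\emph{Diagonal.} For $y-x=0$ we count pairs with $b=-a$. Since $-1\in C_0$, we have $-a\in C_i$ whenever $a\in C_i$. Hence the count is $\sum_{i\in\mathcal I\cap\mathcal J}|C_i|$, which is $0$ exactly when $\mathcal I\cap\mathcal J=\varnothing$. This identifies the loopless condition $c_0=0$ of \Cref{prop:structure-test} with disjointness.

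\emph{Off-diagonal.} Take $w\in C_h$. The count is $\sum_{i\in\mathcal I}\sum_{j\in\mathcal J}|\{a\in C_i: w-a\in C_j\}|$, and we normalise by $w$:
\begin{enumerate}[(a)]
\item Substitute $a=-wu$. Then $a\in C_i$ iff $u\in -w^{-1}C_i=C_{i-h}$, using $-1\in C_0$ and $w^{-1}C_i=C_{i-h}$.
\item Then $w-a=w(1+u)$, and $w(1+u)\in C_j$ iff $1+u\in C_{j-h}$.
\end{enumerate}
So the summand equals $|\{u\in C_{i-h}:u+1\in C_{j-h}\}|=|(C_{i-h}+1)\cap C_{j-h}|=(i-h,j-h)_N$. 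Summing gives $c_h=m_h(\mathcal I,\mathcal J)$. This is independent of the choice of $w\in C_h$, as it must be.

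This step is the main point needing care. The sign and index conventions must match the stated definition $(a,b)_N=|(C_a+1)\cap C_b|$, and the hypothesis $-1\in C_0$ is what lets the minus sign be absorbed. If the substitution came out as $(h-i,\dots)$ instead, the fix would be to replace $u$ by $-u$, which is again harmless because $-1\in C_0$.

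With $c_0$ and $c_h=m_h(\mathcal I,\mathcal J)$ identified, \Cref{prop:structure-test} gives the first claim directly, and it also gives $\mathcal L=\{h: m_h=1\}$. For the second claim, $B_{\mathcal I}B_{\mathcal J}=\lambda(J-I)=\lambda\sum_h B_h$ holds iff $c_0=0$ and $c_h=\lambda$ for all $h$, by comparing coefficients in the basis $\{I,B_h\}$. The remarks about $\lambda=1$ versus $\lambda>1$ then follow from \Cref{rem:nf-vs-mpf}.
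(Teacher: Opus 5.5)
Your proposal is correct and follows essentially the same route as the paper: it identifies the diagonal coefficient of $\underline{C_{\mathcal I}}\,\underline{C_{\mathcal J}}$ as $|C_{\mathcal I}\cap C_{\mathcal J}|$ using $-1\in C_0$, and normalises by an element $w\in C_h$ to get the off-diagonal coefficient $m_h(\mathcal I,\mathcal J)$, before comparing coefficients in the basis of basic relations. The only cosmetic difference is that the paper divides by $w$ and then flips $u\mapsto -u$ as two separate steps, whereas you fold both into the single substitution $a=-wu$, which already gives $(i-h,j-h)_N$ directly, so your hedge about a possible $(h-i,\dots)$ index is unnecessary.
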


\begin{proof}
The coefficient of $0$ in
$\underline{C_{\mathcal I}}\underline{C_{\mathcal J}}$ is the number of pairs
$(x,y)\in C_{\mathcal I}\times C_{\mathcal J}$ with $x+y=0$.  Since
$-1\in C_0$, this coefficient is
\[
   |C_{\mathcal I}\cap C_{\mathcal J}|
   =
   \frac{q-1}{N}|\mathcal I\cap\mathcal J|.
\]
Thus the loopless condition is exactly
$\mathcal I\cap\mathcal J=\varnothing$.
Now fix a non-zero element $c\in C_h$.  The coefficient of $c$ in
$\underline{C_i}\underline{C_j}$ is the number of $x\in C_i$ such that
$c-x\in C_j$.  Dividing by $c$, this is the number of $u\in C_{i-h}$ such that $1-u\in C_{j-h}$.
Since $-1\in C_0$, replacing $u$ by $-u$ shows that this number is
\[
   |(C_{i-h}+1)\cap C_{j-h}|
   =
   (i-h,j-h)_N.
\]
Summing over $i\in\mathcal I$ and $j\in\mathcal J$ gives
$m_h(\mathcal I,\mathcal J)$.
Therefore the product is a $0$-$1$ union of cyclotomic relations exactly when
all the non-zero coefficients $m_h(\mathcal I,\mathcal J)$ are $0$ or $1$, and
the output classes are precisely those with coefficient $1$.  The final
statement follows in the same way, with all non-zero coefficients equal to
$\lambda$.
\end{proof}

\begin{exa}
Let $G=(\mathbb F_{29},+)$ and let $\gamma=2$, which is primitive in
$\mathbb F_{29}$.  Take $N=7$, so that each cyclotomic class has size $4$:
$C_i=2^i\langle 2^7\rangle$, where $i\in\mathbb Z/7\mathbb Z$.
Since $-1\in C_0$, the corresponding cyclotomic scheme is symmetric.  A direct
calculation in $\mathbb ZG$ gives
\[
   \underline{C_0}\,\underline{C_1}
   =
   \underline{C_0}
   +\underline{C_2}
   +\underline{C_5}
   +\underline{C_6}.
\]
Hence, in the cyclotomic association scheme on $29$ vertices,
\[
   B_0B_1=B_0+B_2+B_5+B_6.
\]
This is an MPF whose factors are basic relations of valency $4$.  It is not a
near-factorization of $G$, because the output is a proper union of four of the
seven nonidentity cyclotomic classes rather than all of $G\sm\{0\}$.
\end{exa}

\begin{rem}
\Cref{thm:cyclotomic-criterion} explains how the methods used in
cyclotomic constructions of $\lambda$-fold near-factorizations can be adapted
to the present problem.  A $\lambda$-fold near-factorization corresponds to the
constant condition $m_h(\mathcal I,\mathcal J)=\lambda$ for all $h$, whereas an MPF only requires the sharper $0$-$1$ condition
\[
   m_h(\mathcal I,\mathcal J)\in\{0,1\}.
\]
Thus cyclotomic-number calculations can be used to search for MPFs that are
not near-factorizations, namely cases where the set of indices $h$ with
$m_h=1$ is a proper subset of the nonidentity cyclotomic classes.
\end{rem}

\section{Restrictions on MPFs in symmetric association schemes}
\label{sec:universal-pentagon}

\begin{thm}\label{thm:universal-pentagon}
Let $\mathcal{R}$ be a symmetric $d$-class association scheme on $v$ vertices with adjacency matrices $A_0=I,A_1,\dots,A_d$.  Let $S,T\subseteq\{1,\dots,d\}$ be non-empty index sets with
\[
S\cap T=\varnothing,
\qquad
S\cup T = \{1,\dots,d\}.
\]
Assume that $A_SA_T \;=\; A_{S\cup T}$.
Then $v=5$, $k(S)=k(T)=2$, and the graphs with adjacency matrices $A_S$ and $A_T$ are complementary $5$-cycles.
Equivalently, the fusion of $\mathcal{R}$ with classes $\{0\},S,T$ is the $2$-class association scheme of the cycle $C_5$.
\end{thm}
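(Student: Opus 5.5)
The plan is to collapse the scheme to the fusion with classes $\{0\},S,T$ and then rerun the argument of the $2$-class section. Set $B_1:=A_S$ and $B_2:=A_T$. Since $S\sqcup T=\{1,\dots,d\}$, we have $B_1+B_2=J-I$, and the hypothesis reads $B_1B_2=B_1+B_2=J-I$. Because the scheme is symmetric, $B_1$ and $B_2$ commute. We will show that $\{I,B_1,B_2\}$ spans a subalgebra, so that it is a $2$-class fusion scheme, i.e.\ $B_1$ is strongly regular.

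\textbf{Valencies.} By \Cref{cor:valency-multiplicativity}, $k(S)k(T)=k(S)+k(T)=v-1$. In positive integers, $k(S)k(T)=k(S)+k(T)$ forces $(k(S)-1)(k(T)-1)=1$, so $k(S)=k(T)=2$ and $v=5$. The factor with valency $1$ cannot occur: valency $1$ would give $k(S)k(T)=k(T)<1+k(T)$. Consequently $B_1$ and $B_2$ are $2$-regular graphs on $5$ vertices. A $2$-regular graph on $5$ vertices is a disjoint union of cycles of length at least $3$, and $5$ cannot be split into two parts each at least $3$. Hence $B_1$ is a $5$-cycle, and its complement $B_2$ is also a $5$-cycle, so the two are complementary pentagons.

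\textbf{Fusion.} The fusion statement then follows because the pentagon is strongly regular, so $\{I,B_1,B_2\}$ is closed under multiplication. A spectral cross-check is also available: by \Cref{prop:spectral-test}, on each nonprincipal idempotent $\lambda_h(S)\lambda_h(T)=\lambda_h(S)+\lambda_h(T)=-1$, so $\lambda_h(S)$ satisfies $x^2+x-1=0$, giving the pentagon spectrum.

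\textbf{Main obstacle.} I expect no real obstacle here. The only point needing care is justifying that the fusion is genuinely an association scheme, and this comes for free once $B_1\cong C_5$ is identified. I would present the valency equation $k(S)k(T)=v-1=k(S)+k(T)$ as the key step.
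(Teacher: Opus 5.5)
Your proof is correct, but it reaches $k=2$, $v=5$ by a different route from the paper.

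The paper first expands
$A_S^2=A_S(J-I-A_T)=k(S)I+(k(S)-2)A_S+(k(S)-1)A_T$. This shows that $\{I,A_S,A_T\}$ is a $2$-class fusion, with strongly regular parameters $\lambda=k-2$ and $\mu=k-1$. The paper then applies the feasibility identity $(v-k-1)\mu=k(k-\lambda-1)$ to get $(v-k-1)(k-1)=k$. From this it deduces $k=2$ and $v=5$, and recognizes $A_S$ from the parameter set $(5,2,0,1)$.

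You instead combine valency multiplicativity with $k(S)+k(T)=v-1$, which is the same counting used in the $2$-class section. You then identify the graph by the elementary fact that a $2$-regular graph on $5$ vertices is a single cycle.

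The two key equations are the same in disguise. Since $v-k-1=k(T)$, the paper's $(v-k-1)(k-1)=k$ is exactly your $k(S)k(T)=k(S)+k(T)$.

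Your route is shorter and avoids strongly regular graph machinery entirely. However, it obtains the fusion-scheme clause only afterwards, from the strong regularity of $C_5$. The paper's route establishes the fusion structure, with $\lambda,\mu$ for general $k$, before any numerics. As a result, the ``equivalently'' part of the statement comes out directly rather than as an afterthought.
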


\begin{proof}
Because $S\cap T=\varnothing$ and $S\cup T=\{1,\dots,d\}$, we have
\begin{equation}\label{eq:ST-complete}
A_S + A_T
 = \sum_{i\in S\cup T} A_i
 = \sum_{i=1}^d A_i
 = J - I.
\end{equation}
Thus $A_{S\cup T}=J-I$, so the MPF assumption says $A_SA_T = A_{S\cup T} = J - I$.

Since  we have
\begin{equation}\label{eqn:fusion}
A_S^2 = A_S(J-I-A_T) = k(S)J - A_S - (J-I) = k(S)I + (k(S)-2)A_S + (k(S)-1)A_T, 
\end{equation}
we can see that $\{I=A_0, A_S, A_T\}$ is the set of matrices of a $2$-class fusion scheme of $\mathcal{R}$. 
The graph with adjacency matrix $A_S$ is strongly regular with parameters $(v,k,\lambda,\mu)$, where $k=k(S)$ is its valency, and the parameters $\lambda$ and $\mu$ satisfy  
$A_S^2 = k I + \lambda A_S + \mu A_T$. 




Hence $\mu = k-1,$ and $\lambda = k-2$.
The parameters $(v,k,\lambda,\mu)$ of a strongly regular graph satisfy $(v-k-1)\mu = k(k-\lambda-1)$,
Substituting $\mu=k-1$, $\lambda=k-2$ gives
$(v-k-1)(k-1) = k$.

If $k=1$ then the left-hand side is $0$, while the right-hand side is $1$, impossible.
Thus $k\ge2$, and we may divide by $k-1$ to obtain
\[
v-k-1 = \frac{k}{k-1}.
\]
Hence $k-1$ divides $k$, so $k-1=1$ and therefore $k=2$. Then $\lambda = 0$, $\mu=1$, and $v-2-1 = 2/(1)$ gives $v=5$.
We can conclude that $A_S$ is the adjacency matrix of a strongly regular graph with parameters $(v,k,\lambda,\mu)=(5,2,0,1)$, i.e., a $5$-cycle.
$A_T$ is its complement, which is again a $5$-cycle.  These are exactly the graphs with adjacency matrices $A_S$ and $A_T$.
\end{proof}

\section*{Acknowledgment}

This research was conducted while the second author was visiting the University of Regina. 
This research was supported by NSERC.

\section*{Data Availability} No datasets were generated or analysed during the current study.
\section*{Declarations}
The authors declare no conflict of interest.

\bibliographystyle{plainurlnat}
\bibliography{MPF.bib}
\nocite{*}
\newpage
    
\end{document}